\numberwithin{equation}{section} \numberwithin{figure}{section}
\DeclareMathOperator{\Gal}{Gal}
\DeclareMathOperator{\im}{Im}
\DeclareMathOperator{\ord}{ord} \DeclareMathOperator{\End}{End}
\DeclareMathOperator{\Frob}{Frob} 
\DeclareMathOperator{\Csplus}{C_{\text{s}}^+}
\newcommand{\OO}{\mathcal{O}}
\newcommand{\GL}{\textup{GL}}
\newcommand{\GSp}{\textup{GSp}}
\newcommand{\Sp}{\textup{Sp}}
\newcommand{\SL}{\textup{SL}}
\newcommand{\PGL}{\textup{PGL}}
\newcommand{\PGSp}{\textup{PGSp}}
\newcommand{\F}{\mathbb{F}}
\newcommand{\PP}{\mathbb{P}}
\newcommand\ZZ{\mathbb{Z}}
\newcommand{\Q}{\mathbb{Q}}
\newcommand{\fp}{\mathfrak{p}}
\newtheorem{lemma}{Lemma}
\newtheorem{theorem}[lemma]{Theorem}
\newtheorem{proposition}[lemma]{Proposition}
\newtheorem{corollary}[lemma]{Corollary}
\numberwithin{table}{section}
\theoremstyle{definition}
\newtheorem{example}[lemma]{Example}
\newtheorem{remark}[lemma]{Remark}
\numberwithin{lemma}{section}
\newcommand{\Mod}[1]{\ (\mathrm{mod}\ #1)}
\begin{document}

\title[Hasse surfaces]
{Examples of abelian surfaces failing the local-global principle for isogenies}

\author{\sc Barinder S. Banwait}
\email{barinder.s.banwait@gmail.com}

\subjclass[2010]
{11G10  (primary), 
11F11, 
11Y60.   
(secondary)}

\begin{abstract}
We provide examples of abelian surfaces over number fields $K$ whose reductions at almost all good primes possess an isogeny of prime degree $\ell$ rational over the residue field, but which themselves do not admit a $K$-rational $\ell$-isogeny. This builds on work of Cullinan and Sutherland. When $K=\Q$, we identify certain weight-$2$ newforms $f$ with quadratic Fourier coefficients whose associated modular abelian surfaces $A_f$ exhibit such a failure of a local-global principle for isogenies.
\end{abstract}

\maketitle

\section{Introduction}

Let $A$ be an abelian variety over a number field $K$, and $\ell$ a prime number. If $A$ admits a $K$-rational $\ell$-isogeny, then necessarily, at every prime $\fp$ of good reduction not dividing $\ell$, the reduction $\tilde{A}_\fp$ over $\F_\fp$ also admits an $\ell$-isogeny, rational over $\F_\fp$. One may ask the converse question:

\begin{center}
\emph{
	If $A$ admits a rational $\ell$-isogeny locally at every prime of good reduction away from $\ell$, must $A$ admit a $K$-rational $\ell$-isogeny?
}
\end{center}

If the answer to this question for a given pair $(A/K,\ell)$ is `No', we refer to $\ell$ as an exceptional prime for $A$, and refer to $A$ as a \emph{Hasse at $\ell$ variety over $K$}. We think of Hasse at $\ell$ varieties as being counterexamples to a local-global principle for $\ell$-isogenies.

This problem has been studied extensively in the case where $A$ is an elliptic curve, starting with the work of Sutherland \cite{Drew} who provided a characterisation of Hasse curves in terms of the \emph{projective mod-$\ell$ Galois image} (whose definition we recall in \Cref{sec:prelims}), and found all such counterexamples in the case when $K = \Q$ (of which there is only one up to isomorphism over $\overline{\Q}$).

Cullinan \cite{cullinan2012symplectic} initiated the study of this question in the case of $\dim A = 2$, by identifying the subgroups of $\GSp_4(\F_\ell)$ that the mod-$\ell$ Galois image of a Hasse at $\ell$ variety must be isomorphic to, and remarked that, while his classification could be used to generate Hasse surfaces over arbitrary base fields, it ``would be interesting to create ``natural'' examples of such surfaces''.

In this paper we provide the first examples of Hasse at $\ell$ surfaces that are simple over $\Q$, by studying the abelian varieties $A_f$ associated to weight~$2$ newforms $f$ via the Eichler-Shimura construction:

\begin{example}\label{example:cm_hasse}
Consider the weight two newform of level $\Gamma_1(189)$, Nebentypus the non-primitive Dirichlet character modulo $189$ of conductor $21$, sending the two generators $29$ and $136$ of the group $(\ZZ/189\ZZ)^\times$ to $-1$ and $\zeta_6^5$, where $\zeta_6 := e^{2\pi i/6}$ respectively, whose first few Fourier coefficients are as follows:

\[ f(z) = q + (-2 + 2\zeta_6)q^4 + (-1 + 3\zeta_6)q^7 + O(q^{10}). \]

Then $A_f$ is a Hasse at 7 abelian surface over $\Q$. This $f$ has label \href{https://www.lmfdb.org/ModularForm/GL2/Q/holomorphic/189/2/p/a/}{189.2.p.a} in the \href{https://www.lmfdb.org/}{LMFDB} \cite{lmfdb}.
\end{example}

This $f$ is a CM newform, having complex multiplication by the field $\Q(\sqrt{-3})$, and as such $A_f$ necessarily decomposes over $\overline{\Q}$ as the square of a CM elliptic curve. Our next example provides an instance of an absolutely simple Hasse surface.

\begin{example}\label{example:abs_simple_hasse}
Consider the weight two newform of level $\Gamma_0(7938)$ with Fourier coefficient field $\Q(\sqrt{2})$, whose first few coefficients are as follows ($\beta = \sqrt{2}$):

\[ f(z) = q - q^2 + q^4 - q^8 - 9\beta q^{11} + O(q^{12}). \]

Then $A_f$ is an absolutely simple Hasse at 7 abelian surface over $\Q$. This $f$ has label \href{https://www.lmfdb.org/ModularForm/GL2/Q/holomorphic/7938/2/a/bj/}{7938.2.a.bj} in the LMFDB.
\end{example}

Although the $f$ in this example does not have CM, one may show that it is congruent to a CM newform modulo $7$. We show that this is to be expected:

\begin{theorem}\label{thm:cm_congruence}
Let $f$ be a weight $2$ newform such that the corresponding modular abelian variety $A_f$ is Hasse at some prime $\ell$ which splits completely in the ring of integers of the Hecke eigenvalue field of $f$. Then $f$ is congruent modulo $\ell$ to a newform with complex multiplication.
\end{theorem}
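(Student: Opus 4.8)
The plan is to recast the Hasse hypothesis as a strong constraint on the residual Galois representations of $f$, deduce that one of them is dihedral and induced from an imaginary quadratic field, and then lift the inducing character to obtain a CM newform congruent to $f$. Write $E$ for the Hecke eigenvalue field, $\mathcal{O}_E$ for its ring of integers, and for each prime $\lambda\mid\ell$ of $\mathcal{O}_E$ let $\bar\rho_{f,\lambda}\colon G_\Q\to\GL_2(\F_\ell)$ be the mod-$\lambda$ representation attached to $f$ (the residue field is $\F_\ell$ precisely because $\ell$ splits completely). Since $A_f[\ell]=\bigoplus_{\lambda\mid\ell}A_f[\lambda]$, the variety $A_f$ admits a $\Q$-rational $\ell$-isogeny iff some $\bar\rho_{f,\lambda}$ is reducible, and for a prime $\fp$ of good reduction the reduction $\tilde A_\fp$ admits an $\F_\fp$-rational $\ell$-isogeny iff $\bar\rho_{f,\lambda}(\Frob_\fp)$ has an eigenvalue in $\F_\ell$ for some $\lambda\mid\ell$ — equivalently, the discriminant of its characteristic polynomial is a square in $\F_\ell$. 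So the Hasse hypothesis says: every $\bar\rho_{f,\lambda}$ is irreducible, yet for all but finitely many $\fp$ there is some $\lambda\mid\ell$ for which $\bar\rho_{f,\lambda}(\Frob_\fp)$ has an $\F_\ell$-eigenvalue. Applying Chebotarev to the finite quotient through which $\prod_{\lambda\mid\ell}\bar\rho_{f,\lambda}$ factors, together with a pigeonhole argument over the finitely many $\lambda\mid\ell$ — one complicated by the fact that the $\bar\rho_{f,\lambda}$ may be intertwined, which is handled via Goursat's lemma and the surjectivity of $\det\bar\rho_{f,\lambda}=\bar\chi_\ell\bar\epsilon$ onto $\F_\ell^\times$ — I would isolate a single $\lambda\mid\ell$ for which $\bar\rho_{f,\lambda}$ is irreducible while \emph{every} element of its image $G:=\bar\rho_{f,\lambda}(G_\Q)\subseteq\GL_2(\F_\ell)$ has an eigenvalue in $\F_\ell$.

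The second step classifies such $G$. Running through Dickson's classification of subgroups of $\PGL_2(\F_\ell)$: a cyclic projective image forces $G$ to be reducible; a projective image equal to $\mathrm{PSL}_2(\F_\ell)$ or $\PGL_2(\F_\ell)$, or contained in the normaliser of a non-split Cartan, contains an element acting without fixed point on $\PP^1(\F_\ell)$, i.e.\ with characteristic polynomial irreducible over $\F_\ell$; and the exceptional images $A_4$, $S_4$, $A_5$ are incompatible with $\det G=\F_\ell^\times$: since $A_4$ and $A_5$ have no $2$-dimensional representation the determinant of the preimage in $\GL_2(\F_\ell)$ of such a projective image lies in $(\F_\ell^\times)^2$, while for $S_4$ one gets $\det G\subseteq(\F_\ell^\times)^2$ when $\ell\equiv1\pmod4$, and when $\ell\equiv3\pmod4$ an element of order $4$ in $G$ has eigenvalue ratio $\sqrt{-1}\notin\F_\ell$, contradicting the eigenvalue condition. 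Hence $G$ lies in the normaliser of a split Cartan, so $\bar\rho_{f,\lambda}\cong\Ind_{G_F}^{G_\Q}\bar\psi$ for a quadratic field $F$ and a character $\bar\psi\colon G_F\to\F_\ell^\times$ with $\bar\psi\neq\bar\psi^\tau$ ($\tau$ generating $\Gal(F/\Q)$), by irreducibility. To see $F$ is imaginary: from $\det G=\F_\ell^\times$ and the all-eigenvalues condition for a subgroup of the normaliser of a split Cartan one first deduces $\ell\equiv3\pmod4$; then the Hasse condition is automatic at $\fp$ split in $F$ and at inert $\fp$ says $-\det\bar\rho_{f,\lambda}(\Frob_\fp)=-\fp\,\bar\epsilon(\fp)$ is a square in $\F_\ell$, which by quadratic reciprocity identifies the quadratic character of $F$ with $\chi_{-\ell}\cdot\eta_\epsilon$, where $\chi_{-\ell}$ is the odd quadratic character of conductor $\ell$ and $\eta_\epsilon$ is the quadratic part of $\bar\epsilon$; as $f$ has weight $2$ we have $\epsilon(-1)=1$, so $\eta_\epsilon$ is even and the product is odd — hence $F$ is imaginary quadratic.

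Finally I construct the CM companion. The image of $\bar\psi$ lies in $\F_\ell^\times$, which has order prime to $\ell$, so $\bar\psi$ admits a Teichm\"uller lift to a finite-order character $G_F\to\overline{\Q}_\ell^\times$; via a fixed embedding $\overline{\Q}_\ell\hookrightarrow\CC$ this is a finite-order Hecke character $\psi$ of the imaginary quadratic field $F$, which we take primitive, with $\psi\neq\psi^\tau$ since $\bar\psi\neq\bar\psi^\tau$. Its theta series $g=\theta_\psi$ is a weight-one newform with complex multiplication by $F$ and $\bar\rho_{g,\lambda}\cong\Ind_{G_F}^{G_\Q}\bar\psi\cong\bar\rho_{f,\lambda}$; comparing traces of Frobenius gives $a_\fp(g)\equiv a_\fp(f)\pmod{\lambda}$ at all good $\fp$, which is precisely the assertion that $f$ is congruent modulo $\ell$ to the CM newform $g$. (If a weight-two CM companion is wanted, one twists a Hecke character of $F$ of infinity type $(1,0)$ by a suitable finite-order character — again obtained from a Teichm\"uller lift — and passes to the associated holomorphic CM newform.)

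The step I expect to be the main obstacle is the classification in the second paragraph — in particular the exclusion of the exceptional projective images and the identification of $F$ as imaginary quadratic, where the precise shape of $\det\bar\rho_{f,\lambda}=\bar\chi_\ell\bar\epsilon$ and the Hasse condition at primes inert in $F$ must be combined carefully; the reduction to a single prime $\lambda\mid\ell$ in the first paragraph is a further delicate point when $E\neq\Q$.
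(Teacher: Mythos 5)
Your proposal reaches the right conclusion and follows the same overall arc as the paper --- the Hasse hypothesis forces one residual representation $\bar\rho_{f,\lambda}$ to be dihedral, induced from a character of an imaginary quadratic field, which is then lifted to a CM form --- but two of the three steps are executed differently. Where the paper simply quotes Sutherland's classification of Hasse subgroups of $\PGL_2(\F_\ell)$ (\Cref{prop:hasse_elliptic}) to get the split-Cartan-normaliser conclusion and $\ell\equiv 3\pmod 4$, you re-derive it from Dickson's theorem; this is fine in outline, but your exclusion of projective image $S_4$ is not right as stated: since $S_4$ has a quotient of order $2$, the projective determinant can be the sign character, so $\det G\subseteq(\F_\ell^\times)^2$ does not follow when $\ell\equiv 1\pmod 4$ --- one must instead check directly that in every congruence class of $\ell$ some transposition, $3$-cycle or $4$-cycle has a preimage with irreducible characteristic polynomial. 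Your identification of the inducing field as the one cut out by $\chi_{-\ell}\cdot\eta_\epsilon$ is in fact \emph{more} careful than the paper's (which asserts $F=\Q(\sqrt{-\ell})$, dropping the quadratic part of the nebentypus), and your parity argument ($\eta_\epsilon$ even because $\epsilon(-1)=1$ in weight $2$, $\chi_{-\ell}$ odd) is exactly what shows $F$ is imaginary. The genuine divergence is the lifting step: the paper invokes Billerey--Menares (Th\'{e}or\`{e}me~1.1 of \cite{billerey2018representations}) and reads off weight $2$ from their Corollaire~1.3, whereas you build the companion by hand from the Teichm\"uller lift of $\bar\psi$ and its theta series. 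Your route is more elementary and self-contained, at the cost of first producing a weight-one form; the parenthetical twist by a Hecke character of infinity type $(1,0)$ is the standard fix and is essentially what the cited theorem does internally. Finally, the reduction to a single $\lambda\mid\ell$ whose residual image is Hasse in $\PGL_2(\F_\ell)$ --- which you flag as delicate and propose to settle by Chebotarev, Goursat and pigeonhole --- is asserted without proof in the paper too; to close it one must show that if both $\im\PP\bar\rho_{f,\lambda}$ and $\im\PP\bar\rho_{f,\lambda'}$ contain elements acting without fixed points on $\PP^1(\F_\ell)$, then some single Frobenius class is fixed-point-free for both, and this is precisely where Cullinan's classification of Hasse subgroups of $\PGSp_4(\F_\ell)$ does the real work.
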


The structure of the paper is as follows. In \Cref{sec:prelims} we survey previous and related work on this question, including Sutherland's group-theoretic reformulation of Hasse at $\ell$ varieties. \Cref{sec:decomposable_abelian_surfaces} studies the modular abelian varieties $A_f$ indicated above, yielding sufficient conditions on $f$ to ensure that $A_f$ is Hasse. Sections \ref{sec:find_examples_using_code} and \ref{sec:abs_simple_hasse} explain the algorithmic ingredients required to find examples of newforms satisfying the sufficient conditions, including the two examples given above. Finally in \Cref{sec:cm_congruence} we prove \Cref{thm:cm_congruence}.

\section{Background and Preliminaries}\label{sec:prelims}

For an abelian variety $A$ over a number field $K$, the absolute Galois group $G_K := \Gal(\overline{K}/K)$ acts on the $\ell$-torsion subgroup $A(\overline{K})[\ell]$, yielding the mod-$\ell$ representation

\[ \bar{\rho}_{A,\ell} : G_K \to \GL_{2d}(\F_\ell),\]

whose image $G_{A,\ell} := \im \bar{\rho}_{A,\ell}$ is well-defined up to conjugacy; we refer to $G_{A,\ell}$ as \emph{the mod-$\ell$ image of $A$}. We let $H_{A,\ell} := G_{A,\ell}$ modulo scalars, which we refer to as \emph{the projective mod-$\ell$ image of $A$}, viewed as a subgroup of $\PGL_{2d}(\F_\ell)$. If $A$ admits a polarisation of degree coprime to $\ell$, then the symplectic property of the Weil pairing on $A[\ell]$ ensures that $G_{A,\ell}$ is  contained in $\GSp_{2d}(\F_\ell)$, and consequently that $H_{A,\ell} \subseteq \PGSp_{2d}(\F_\ell)$. Henceforth we will assume that $A$ is principally polarised. 

By an $\ell$-isogeny $\phi : A \to A'$ of principally polarised abelian varieties of dimension $d$ defined over a field $k$ with char($k$) $\neq \ell$ we mean a surjective morphism with kernel isomorphic to $\ZZ/\ell\ZZ$. We note that these isogenies are \emph{not} compatible with the principal polarisations of $A$ and $A'$, since this kernel is not a maximal isotropic subgroup of $A[\ell]$ with respect to the $\ell$-Weil pairing. To consider isogenies that \emph{are} compatible with the polarisations, one would need to consider certain isogenies with kernel isomorphic to $(\ZZ/\ell\ZZ)^d$, often denoted as $(\ell,\cdots,\ell)$-isogenies (see e.g. \cite{costello2020supersingular}). One may well formulate a local-global question for such isotropic isogenies, and the results in \cite{orr2017compatibility} are likely to be relevant here; but we do not address this problem in the present paper.

Sutherland's characterisation of Hasse curves mentioned in the Introduction is expressed in terms of the canonical faithful action of $H_{A,\ell}$ on the projective space $\PP^{2d-1}(\F_\ell)$. Following our previous paper \cite{BC13}, given a subgroup $H$ of $\PGSp_{2d}(\F_\ell)$, we say that $H$ is \emph{Hasse} if its action on $\PP^{2d-1}(\F_\ell)$ satisfies the following two properties:

\begin{itemize}
	\item
	every element $h \in H$ fixes a point in $\PP^{2d-1}(\F_\ell)$;
	\item
	there is no point in $\PP^{2d-1}(\F_\ell)$ fixed by the whole of $H$.
\end{itemize}

We also refer to a subgroup $G$ of $\GSp_{2d}(\F_\ell)$ as Hasse if its image modulo scalars is Hasse.

The following result is then used by Sutherland in the case of $\dim A = 1$: the details of the general case are entirely analogous, and may be found spelled out in \cite{BanThesis}, Section 2.2:

\begin{proposition}[Sutherland]\label{prop:group_theoretic_reformulation}
An abelian variety $A/K$ is Hasse at $\ell$ if and only if $H_{A,\ell}$ is Hasse.
\end{proposition}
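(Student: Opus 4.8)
The plan is to unwind both directions of the equivalence through the dictionary between rational isogenies and Galois-stable subspaces, translated into the projective action of $H_{A,\ell}$ on $\PP^{2d-1}(\F_\ell)$.

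First recall the basic translation: a $K$-rational $\ell$-isogeny $\phi: A \to A'$ with kernel $\cong \ZZ/\ell\ZZ$ corresponds to a one-dimensional $\F_\ell$-subspace of $A[\ell]$ stable under $G_K$, equivalently a point of $\PP^{2d-1}(\F_\ell)$ fixed by $G_{A,\ell}$, equivalently (since scalars act trivially on $\PP^{2d-1}$) a point fixed by the whole of $H_{A,\ell}$. So the condition ``$A$ does \emph{not} admit a $K$-rational $\ell$-isogeny'' is exactly the second bullet in the definition of Hasse. For the first bullet, I would use the Chebotarev density theorem: every element $h \in H_{A,\ell}$ is the image of some $\Frob_\fp$ for a density-positive set of good primes $\fp \nmid \ell$ (lifting $h$ to $G_{A,\ell}$ and applying Chebotarev in the finite extension cut out by $\bar\rho_{A,\ell}$), and conversely $H_{A,\ell}$ is generated by such Frobenius images. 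At a good prime $\fp \nmid \ell$, the reduced variety $\tilde A_\fp$ admits an $\F_\fp$-rational $\ell$-isogeny if and only if $\Frob_\fp$ fixes a point of $\PP^{2d-1}(\F_\ell)$, since $\bar\rho_{\tilde A_\fp, \ell}$ is the restriction of $\bar\rho_{A,\ell}$ to the decomposition group and is (pro)cyclic, generated by $\Frob_\fp$. Thus ``$\tilde A_\fp$ admits an $\F_\fp$-rational $\ell$-isogeny for almost all good $\fp \nmid \ell$'' is equivalent to ``every $\Frob_\fp$ fixes a point of $\PP^{2d-1}(\F_\ell)$'', which, via Chebotarev, is equivalent to ``every element of $H_{A,\ell}$ fixes a point''.

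Putting these together: $A$ is Hasse at $\ell$ means (by definition) that $A$ admits a rational $\ell$-isogeny locally almost everywhere but not globally, which by the two translations above says precisely that every element of $H_{A,\ell}$ fixes a point of $\PP^{2d-1}(\F_\ell)$ while $H_{A,\ell}$ itself fixes no point — i.e. $H_{A,\ell}$ is Hasse. Conversely a Hasse subgroup forces the local-everywhere-but-not-global behaviour. One technical point to handle carefully is the ``almost all'' versus ``all'' in the local hypothesis: a subtlety is that excluding finitely many primes does not change the set of Frobenius conjugacy classes appearing, by Chebotarev, so the hypothesis for almost all good $\fp \nmid \ell$ already forces every element of $H_{A,\ell}$ to fix a point; and conversely if every element fixes a point then \emph{every} good $\fp \nmid \ell$ (not merely almost all) has the local isogeny, so the two formulations of the local condition coincide here.

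The main obstacle is nothing deep but rather bookkeeping: making precise that the mod-$\ell$ representation of the reduction $\tilde A_\fp$ at a good prime $\fp \nmid \ell$ is the restriction of $\bar\rho_{A,\ell}$ along a decomposition group at $\fp$ — this uses the Néron–Ogg–Shafarevich criterion (good reduction implies unramified at $\fp$, so the decomposition group acts through its cyclic quotient generated by $\Frob_\fp$) and the compatibility of $\ell$-torsion with reduction for $\fp \nmid \ell$. Once that identification is in place, every step above is a formal consequence, and the argument is, as the excerpt notes, entirely parallel to Sutherland's $\dim A = 1$ case (and written out in \cite{BanThesis}, Section 2.2); I would simply cite that reference for the routine verifications rather than reproduce them.
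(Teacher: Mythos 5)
Your proposal is correct and follows exactly the standard argument the paper relies on (and defers to \cite{Drew} and Section~2.2 of \cite{BanThesis} for): the dictionary between $K$-rational $\ell$-isogenies and $H_{A,\ell}$-fixed points of $\PP^{2d-1}(\F_\ell)$ for the global condition, and Chebotarev together with the N\'eron--Ogg--Shafarevich identification of $\bar{\rho}_{\tilde{A}_\fp,\ell}$ with the restriction of $\bar{\rho}_{A,\ell}$ to a decomposition group for the local one. Nothing further is needed.
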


In the case $\dim A = 1$, it is easy to show that no subgroup of $\PGL_{2}(\F_2)$ is Hasse, so for elliptic curves the prime $2$ is never an exceptional prime. For an odd prime $\ell$, define $\ell^\ast := +\ell$ if $\ell \equiv 1 \Mod{4}$, and $\ell^\ast := -\ell$ otherwise.

Sutherland provides necessary conditions for an elliptic curve $E$ over a number field $K$ to be Hasse at an odd prime $\ell$, under the assumption that $\sqrt{\ell^\ast} \notin K$, which is equivalent to the determinant of the projective representation $\PP\bar{\rho}_{E,\ell}$ being surjective (see Lemma~2.1 in \cite{BC13}). These conditions were shown to be sufficient in Section 7 of \cite{BC13}. In the following Proposition, by $D_{2n}$ we mean the dihedral group of order $2n$.

\begin{proposition}[\cite{Drew}, \cite{BC13}]\label{prop:hasse_elliptic}
Let $\ell$ be an odd prime, $K$ a number field, and assume that $\sqrt{\ell^\ast} \notin K$. Then an elliptic curve $E$ over $K$ is Hasse at $\ell$ if and only if the following hold:
\begin{enumerate}
	\item
	the projective mod-$\ell$ image of $E$ is isomorphic to $D_{2n}$, where $n > 1$ is an odd divisor of $(\ell-1)/2$;
	\item
	$\ell \equiv 3 \Mod{4}$;
	\item
	the mod-$\ell$ image of $E$ is contained in the normaliser of a split Cartan subgroup of $\GL_2(\F_\ell)$;
	\item
	$E$ obtains a rational $\ell$-isogeny over $K(\sqrt{\ell^\ast})$.
\end{enumerate}
\end{proposition}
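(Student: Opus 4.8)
The plan is to derive both directions from Sutherland's reformulation (\Cref{prop:group_theoretic_reformulation}), which turns the claim into a statement about the projective image $H_{E,\ell}\subseteq\PGL_2(\F_\ell)$: it must be \emph{Hasse}, i.e.\ every one of its elements fixes a point of $\PP^1(\F_\ell)$ while the group has no global fixed point. Two translations are used throughout. First: an element of $\PGL_2(\F_\ell)$ fixes a point of $\PP^1(\F_\ell)$ iff it is unipotent or lies in a split torus — equivalently, any lift to $\GL_2(\F_\ell)$ has an eigenvalue in $\F_\ell$ — which one reads off from the characteristic polynomial. Second: by Lemma~2.1 of \cite{BC13}, the hypothesis $\sqrt{\ell^\ast}\notin K$ says exactly that the composite $H_{E,\ell}\hookrightarrow\PGL_2(\F_\ell)\to\F_\ell^\times/(\F_\ell^\times)^2$ (reduction of the determinant modulo squares) is surjective, i.e.\ $\det$ takes a non-square value on $G_{E,\ell}$; and as $K(\sqrt{\ell^\ast})$ is the fixed field of the kernel of this composite, $\bar\rho_{E,\ell}\bigl(G_{K(\sqrt{\ell^\ast})}\bigr)=\{g\in G_{E,\ell}:\det g\in(\F_\ell^\times)^2\}$.

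For the forward implication, suppose $H_{E,\ell}$ is Hasse and apply Dickson's classification of subgroups of $\PGL_2(\F_\ell)$ (for $\ell$ prime, $\F_\ell$ has no proper subfield): cyclic, contained in a Borel, contained in the normaliser of a split or a non-split torus, one of $A_4$, $S_4$, $A_5$, or equal to $\PSL_2(\F_\ell)$ or $\PGL_2(\F_\ell)$. Cyclic groups and subgroups of a Borel have a global fixed point; $\PSL_2(\F_\ell)$, $\PGL_2(\F_\ell)$, and dihedral subgroups of a non-split torus normaliser with non-trivial rotation part contain a non-split semisimple element (an element of order exceeding $2$ in a non-split torus); $A_4$ and $A_5$ have no subgroup of index $2$, so lie in $\PSL_2(\F_\ell)$, contradicting surjectivity of the projective determinant; and for $S_4$, either $S_4\subseteq\PSL_2(\F_\ell)$ (again contradicting the hypothesis) or $S_4\cap\PSL_2(\F_\ell)=A_4$, in which case a Cayley--Hamilton computation shows the transpositions (when $\ell\equiv1\Mod{4}$) or the double transpositions (when $\ell\equiv3\Mod{4}$) are non-split involutions of $\PGL_2(\F_\ell)$ — all of these fail one of the two Hasse conditions. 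Hence $H_{E,\ell}\cong D_{2n}$ with $n\ge2$, lying in the normaliser of a split Cartan subgroup $C_{\mathrm s}$, so $G_{E,\ell}\subseteq N(C_{\mathrm s})$: this is (3). Let $\langle e_1\rangle,\langle e_2\rangle$ be the lines fixed by $C_{\mathrm s}$; the rotation subgroup of $D_{2n}$ fixes exactly these and the reflections interchange them, so there is no global fixed point. The reflections are the anti-diagonal elements $\left(\begin{smallmatrix}0&a\\b&0\end{smallmatrix}\right)$ of $G_{E,\ell}$, each fixing a point iff $ab$ is a square; writing such an element as $\left(\begin{smallmatrix}0&a_0\\b_0&0\end{smallmatrix}\right)\mathrm{diag}(x,y)$ gives $ab=a_0b_0\,xy$, so the first Hasse condition forces $\det\bigl(G_{E,\ell}\cap C_{\mathrm s}\bigr)\subseteq(\F_\ell^\times)^2$, which, applied to a lift of the rotation generator (whose eigenvalue ratio has order $n$), forces $n\mid(\ell-1)/2$: this is (1). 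Then every diagonal element of $G_{E,\ell}$ has square determinant and every anti-diagonal element has determinant $-ab$ with $ab$ a square, so $\det G_{E,\ell}\subseteq(\F_\ell^\times)^2\cup\bigl(-(\F_\ell^\times)^2\bigr)$, which contains a non-square only if $-1$ is a non-square; since the hypothesis supplies a non-square, $\ell\equiv3\Mod{4}$ — this is (2) — and so $(\ell-1)/2$, hence also $n$, is odd. Finally $\bar\rho_{E,\ell}\bigl(G_{K(\sqrt{\ell^\ast})}\bigr)=\{g\in G_{E,\ell}:\det g\in(\F_\ell^\times)^2\}$ equals the diagonal part of $G_{E,\ell}$ (the anti-diagonal elements having determinant $-ab\notin(\F_\ell^\times)^2$), which fixes $\langle e_1\rangle$, so $E$ obtains a rational $\ell$-isogeny over $K(\sqrt{\ell^\ast})$: this is (4).

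For the converse, assume (1)--(4). By (3) and (1), $H_{E,\ell}\cong D_{2n}$ lies in the normaliser of a split Cartan subgroup $C_{\mathrm s}$ with $n\ge3$ odd; its rotation subgroup fixes only the two lines $\langle e_1\rangle,\langle e_2\rangle$ and the reflections interchange them, so there is no global fixed point. By (4) the subgroup $R:=\{g\in G_{E,\ell}:\det g\in(\F_\ell^\times)^2\}=\bar\rho_{E,\ell}\bigl(G_{K(\sqrt{\ell^\ast})}\bigr)$ is reducible; as $[G_{E,\ell}:R]\le2$, if $R$ contained an anti-diagonal element then — using $|H_{E,\ell}|=2n\ge6$ — it would either also contain a non-scalar diagonal element, which is impossible since such a matrix and an anti-diagonal matrix share no eigenvector, or consist of scalars together with a single coset of anti-diagonal matrices, forcing $[G_{E,\ell}:R]=n\ge3$, again impossible. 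So $R$ contains no anti-diagonal element, hence every anti-diagonal element of $G_{E,\ell}$ has non-square determinant $-ab$; by (2), $-1\notin(\F_\ell^\times)^2$, so $ab\in(\F_\ell^\times)^2$ and $\left(\begin{smallmatrix}0&a\\b&0\end{smallmatrix}\right)$ has eigenvalues $\pm\sqrt{ab}\in\F_\ell$, so every reflection fixes a point. Thus every element of $H_{E,\ell}$ fixes a point, $H_{E,\ell}$ is Hasse, and \Cref{prop:group_theoretic_reformulation} concludes.

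The main obstacle is the case analysis in the forward direction, and in particular the elimination of $S_4$: unlike $A_4$ and $A_5$ it need not lie in $\PSL_2(\F_\ell)$, so the projective-determinant hypothesis does not dispose of it directly, and one must instead determine which of its involutions are non-split in $\PGL_2(\F_\ell)$ according to $\ell$ modulo $4$. More generally, the care throughout lies in keeping distinct the two meanings of ``non-split'' — eigenvalues outside $\F_\ell$ versus non-square determinant of a lift to $\GL_2(\F_\ell)$ — and tracking how they interact with the parities of $\ell-1$ and of $n$; the rest is routine linear algebra over $\F_\ell$.
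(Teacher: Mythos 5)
The paper does not actually prove this proposition: it is quoted from Sutherland \cite{Drew} (necessity) and Section~7 of \cite{BC13} (sufficiency), so there is no in-text argument to compare against. Your reconstruction is correct and follows the same route as those sources: reduce to the group-theoretic statement via \Cref{prop:group_theoretic_reformulation}, run Dickson's classification of subgroups of $\PGL_2(\F_\ell)$ using the surjectivity of the projective determinant (Lemma~2.1 of \cite{BC13}) to dispose of the exceptional and large-image cases, then do the eigenvalue/determinant bookkeeping inside the normaliser of the split Cartan; identifying $\bar\rho_{E,\ell}\bigl(G_{K(\sqrt{\ell^\ast})}\bigr)$ with the square-determinant subgroup is exactly the mechanism of \cite{BC13}. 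Two small imprecisions, neither of which damages the argument: a cyclic subgroup generated by a non-split semisimple element has no fixed point at all (so it fails the first Hasse condition, not the second, as your phrasing suggests); and when the rotation part of a dihedral subgroup of the normaliser of a non-split torus has order exactly $2$, there is no element ``of order exceeding $2$'' to invoke --- one should instead observe that the unique involution of the non-split torus is itself non-split (a lift $g$ satisfies $g^2=cI$ with $c$ a non-square, so its eigenvalues $\pm\sqrt{c}$ lie outside $\F_\ell$). With those repairs the case analysis is complete, and the remaining steps --- deriving $n\mid(\ell-1)/2$ from the square-determinant condition on $G_{E,\ell}\cap C_{\mathrm{s}}$, deriving $\ell\equiv 3\Mod{4}$ (hence $n$ odd, which also eliminates the $C_2\times C_2$ configuration) from the hypothesis $\sqrt{\ell^\ast}\notin K$, and the converse --- are all sound.
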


\begin{remark}
For the converse of the above Proposition, only conditions (1) and (2) are required; together these imply conditions (3) and (4).
\end{remark}

\begin{remark}
The case of $\sqrt{\ell^\ast} \in K$ was dealt with independently by \cite{BanThesis} and \cite{AnniThesis} (see also \cite{anni2014local}).
\end{remark}

The property of an elliptic curve $E$ being Hasse at some prime $\ell$ depends only on $j(E)$, provided $j(E) \notin \left\{0,1728\right\}$. Sutherland therefore defines an \emph{exceptional pair} to be a pair $(\ell, j_0)$ of a prime $\ell$ and an element $j_0 \neq 0, 1728$ of a number field $K$ such that there exists a Hasse at $\ell$ curve over $K$ of $j$-invariant $j_0$.

Sutherland moreover shows, in the proof of Theorem $2$ in \cite{Drew}, that a Hasse curve cannot have CM if $\ell > 7$; therefore, specialising now to $K = \Q$, elliptic curves with level structure given by (3) above arise as non-trivial points on the modular curve $X_s(\ell)$ (the trivial points being the cusps and CM points). That such points exist only for $\ell \in \left\{2,3,5,7,13\right\}$ follows from the work of Bilu, Parent and Rebolledo \cite{BPR}, although Sutherland was able to deduce the following remarkable result using the earlier work of Parent \cite{parent2005towards}, as well as an explicit study of the modular curve $X_{D_6}(7)$ and its rational points.

\begin{theorem}[Sutherland]
The only exceptional pair for $\Q$ is

	\[ \left(7,\frac{2268945}{128}\right).\]

\end{theorem}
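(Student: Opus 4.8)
The plan is to push the problem through Sutherland's characterisation of Hasse curves, \Cref{prop:hasse_elliptic}, and then translate the few surviving possibilities into the determination of rational points on modular curves. Over $K=\Q$ the hypothesis $\sqrt{\ell^\ast}\notin K$ is automatic, since $\pm\ell$ is never a perfect square; hence a Hasse at $\ell$ curve $E/\Q$ must satisfy conditions (1)--(4) of \Cref{prop:hasse_elliptic}, while conversely, by the remark following that proposition, conditions (1) and (2) alone already force $E$ to be Hasse at $\ell$. Since twisting by a quadratic character leaves the projective mod-$\ell$ image unchanged, whether $E$ is Hasse at $\ell$ depends only on $\ell$ and on $j(E)$ (when $j(E)\neq 0,1728$); so for each prime $\ell$ it suffices to find all $j_0\in\Q\setminus\{0,1728\}$ arising as $j$-invariants of curves satisfying (1) and (2).

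First I would pin down $\ell$. Condition (2) forces $\ell\equiv 3\Mod 4$, so $(\ell-1)/2$ is odd, and condition (1) then requires $(\ell-1)/2$ to possess an odd divisor $n>1$, i.e.\ $(\ell-1)/2>1$; this rules out $\ell=3$ and leaves $\ell\ge 7$. For $\ell>7$ a Hasse curve cannot have complex multiplication \cite{Drew}, so by condition (3) it determines a non-cuspidal, non-CM --- hence non-trivial --- rational point on the split-Cartan modular curve $X_s(\ell)$; by the work of Parent and of Bilu--Parent--Rebolledo \cite{parent2005towards,BPR} such points occur only for $\ell\in\{2,3,5,7,13\}$, and the sole member of this set exceeding $7$ is $13$, which contradicts $\ell\equiv 3\Mod 4$. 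Hence $\ell=7$ is the only candidate exceptional prime for $\Q$.

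It remains to treat $\ell=7$. Now $(\ell-1)/2=3$, so condition (1) says precisely that the projective mod-$7$ image equals $D_6\cong S_3$, realised with index $2$ inside the normaliser $D_{12}$ of a split Cartan subgroup of $\PGL_2(\F_7)$. The modular curve $X_s(7)$ attached to the full normaliser has genus $0$; one therefore passes to the cover $X_{D_6}(7)$ classifying the finer level structure, which is a curve of positive genus, and one makes its finite set of rational points explicit. Discarding cusps, CM points, and points where the image drops to a proper subgroup of $D_6$, exactly one point remains, and it has $j$-invariant $j_0=2268945/128$. For existence, one checks that an $E/\Q$ with $j(E)=j_0$ has projective mod-$7$ image genuinely equal to $D_6$, so that $E$ satisfies (1) and (2) and is therefore Hasse at $7$; and $j_0\notin\{0,1728\}$ is not one of the thirteen rational CM $j$-invariants, not even being an integer. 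Combining this with the reduction above gives that $(7,2268945/128)$ is the unique exceptional pair for $\Q$.

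The main obstacle is this last step for $\ell=7$: producing an explicit model of $X_{D_6}(7)$, determining all of its rational points, and correctly sorting them so as to isolate the single point yielding a genuinely Hasse elliptic curve. By contrast, the reduction to $\ell=7$ is comparatively soft, given the (deep) input on rational points of split-Cartan modular curves.
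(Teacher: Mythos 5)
Your proposal follows exactly the route the paper attributes to Sutherland: reduce via \Cref{prop:hasse_elliptic} to conditions (1)--(2), eliminate $\ell=3$ and (via the no-CM observation plus Parent/Bilu--Parent--Rebolledo on split-Cartan curves) all $\ell>7$, and then determine the rational points of $X_{D_6}(7)$. This matches the paper's own treatment, which likewise defers the explicit model and rational-point computation for $X_{D_6}(7)$ to \cite{Drew}, so the argument is correct at the same level of detail as the source.
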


The analogue of \Cref{prop:hasse_elliptic} providing precisely which subgroups of $\PGSp_4(\F_\ell)$ are Hasse was given by Cullinan \cite{cullinan2012symplectic}. Given a subgroup $H \subseteq \PGSp_4(\F_\ell)$, let $\pi^{-1}(H)$ denote the pullback of $H$ to $\GSp_4(\F_\ell)$.

\begin{theorem}[Cullinan]
A subgroup $H \subseteq \PGSp_4(\F_\ell)$ is Hasse if and only if $\pi^{-1}(H) \cap \Sp_4(\F_\ell)$ is isomorphic to one of the groups in \Cref{tab:cullinan}.
\end{theorem}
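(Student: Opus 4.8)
The plan is to combine Sutherland's group-theoretic reformulation (\Cref{prop:group_theoretic_reformulation}), the subgroup structure of $\GSp_4(\F_\ell)$, and an explicit description of when a conformal symplectic transformation fixes a point of $\PP^3(\F_\ell)$. First I would unwind the two defining properties of ``Hasse''. A point of $\PP^3(\F_\ell)$ fixed by $\bar h\in H$ is exactly an eigenvector with eigenvalue in $\F_\ell$ of any lift $g\in\GSp_4(\F_\ell)$, so the first property says that \emph{every} $g\in\pi^{-1}(H)$ has an eigenvalue in $\F_\ell$, and the second says that $\pi^{-1}(H)$ has no invariant line in $\F_\ell^4$. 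Since $g$ has a multiplier $\mu$, its characteristic polynomial is $\mu$-reciprocal, $t^4-at^3+bt^2-a\mu t+\mu^2$, and the substitution $s=t+\mu/t$ factors it as $(t^2-s_1t+\mu)(t^2-s_2t+\mu)$ with $s_1,s_2$ the roots of $s^2-as+(b-2\mu)$; hence $g$ fixes a point of $\PP^3(\F_\ell)$ if and only if some $s_i$ lies in $\F_\ell$ with $s_i^2-4\mu$ a square in $\F_\ell$. I would also record two monotonicity facts that organise the search: the first property passes to subgroups and the second to overgroups, so the Hasse subgroups are precisely the subgroups-without-a-common-fixed-point of the (finitely many up to conjugacy) maximal subgroups all of whose elements fix a point of $\PP^3(\F_\ell)$.

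I would then feed in the classification of subgroups of $\PGSp_4(\F_\ell)$ — ultimately Mitchell's classification of the subgroups of $\mathrm{PSp}_4$, in a form adapted to $\GSp_4$ — and prune. A subgroup acting irreducibly and primitively on $\F_\ell^4$ contains, for $\ell$ large, an element whose characteristic polynomial is irreducible of degree $4$, hence a point-free element; the finitely many small-$\ell$ exceptional subgroups in the classification are dispatched directly. What survives are the reducible and imprimitive families: stabilisers of a $2$-plane but of no line, stabilisers of a direct-sum decomposition $\F_\ell^4=V_1\oplus V_2$ or $L_1\oplus\cdots\oplus L_4$, and normalisers of split, non-split and mixed Cartan subgroups, together with their subgroups.

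The technical heart is the analysis of these families, and it is here that \Cref{tab:cullinan} is produced. In a Cartan-normaliser case the torus part of $H$ fixes each coordinate line or plane, so the second property forces $H$ to contain ``Weyl part'' elements permuting them nontrivially, while the first property forces every such element still to have a rational eigenvalue; writing out the Weyl-group elements and imposing that the discriminants $s_i^2-4\mu$ from the first paragraph be squares in $\F_\ell$ pins down both the admissible subgroups of the Weyl group and the congruence conditions on $\ell$ — the $\GSp_4$ analogues of the ``$n$ odd'' and ``$\ell\equiv 3\bmod 4$'' conditions of \Cref{prop:hasse_elliptic}. In the genuinely reducible case, restricting the representation to the invariant $2$-plane $W$ and to $\F_\ell^4/W$ reduces the problem to the two-dimensional $\PGL_2$ analysis already recorded in \Cref{prop:hasse_elliptic}, glued compatibly across the two pieces. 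Collecting the survivors and recording each by the isomorphism type of $\pi^{-1}(H)\cap\Sp_4(\F_\ell)$ — checking in each case that this isomorphism type suffices to decide the Hasse property, notwithstanding that $\pi^{-1}(H)\cap\Sp_4$ only surjects onto $H\cap\mathrm{PSp}_4(\F_\ell)$ — yields precisely \Cref{tab:cullinan}.

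The main obstacle is the exhaustive, prime-uniform bookkeeping. One must traverse the entire subgroup classification of $\PGSp_4(\F_\ell)$, treat separately the small primes $\ell$ at which that classification acquires extra members, and for each surviving candidate determine exactly which elements fail to fix a point of $\PP^3(\F_\ell)$ — in effect computing the cycle structure of the action on $\PP^3(\F_\ell)$ — in order to read off the precise arithmetic conditions, all while keeping the distinction between $\Sp_4$, $\mathrm{PSp}_4$ and $\PGSp_4$ (scalars and similitudes) under control.
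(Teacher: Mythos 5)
The paper does not prove this theorem at all --- it is quoted verbatim from Cullinan \cite{cullinan2012symplectic} and the table is reproduced from there --- so there is no in-paper argument to compare yours against. Your outline does broadly track Cullinan's actual strategy (Sutherland's reformulation, the $\mu$-reciprocal factorisation of characteristic polynomials of symplectic similitudes, and a traversal of Mitchell's subgroup classification), but as written it contains a step that is not merely incomplete but wrong, and wrong in a way that would delete several rows of the very table you are trying to derive.

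The problematic claim is that an irreducibly and primitively acting subgroup contains, for $\ell$ large, an element with irreducible degree-$4$ characteristic polynomial, so that all such subgroups are discarded apart from finitely many small-$\ell$ exceptions. The type $\mathcal{C}_6$ and type $\mathcal{S}$ entries of \Cref{tab:cullinan} --- for instance $\SL_2(\F_5)$ for $\ell\equiv 1\ (30)$, $\SL_2(\F_3)$ for $\ell\equiv 1\ (24)$, $2.S_6$ for $\ell\equiv 1\ (120)$, and the extraspecial normalisers $2^{1+4}_{-}.(\cdot)$ --- are exactly primitive irreducible subgroups, they occur for infinitely many (arbitrarily large) $\ell$, and they \emph{are} Hasse. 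The mechanism is the opposite of what you assert: for a fixed finite group $G\subseteq\Sp_4(\F_\ell)$ the eigenvalues of every element are roots of unity of order bounded in terms of $|G|$ alone, so once $\ell$ satisfies the congruence in the third column of the table \emph{every} characteristic polynomial splits completely over $\F_\ell$; every element then fixes a point of $\PP^3(\F_\ell)$, while irreducibility guarantees there is no common fixed point. Indeed the paper's own worked example immediately after the theorem (the Jacobian of the curve $249$.a.$249.1$, base-changed so that $G_{A,\ell}\cap\Sp_4(\F_{31})\cong\SL_2(\F_5)$) is built precisely on one of the rows your pruning step would discard. Beyond this, the ``technical heart'' --- deriving the specific groups and congruence conditions in the $\mathcal{C}_2$ rows, and verifying that the isomorphism type of $\pi^{-1}(H)\cap\Sp_4(\F_\ell)$ really does determine the Hasse property --- is announced rather than carried out, so even after repairing the primitive-irreducible case the proposal remains a plan for a proof rather than a proof.
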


\begin{table}[htp]
\begin{center}
\begin{tabular}{|c|c|c|}
\hline
Type & Group & Condition\\
\hline
$\mathcal{C}_2$ & $D_{(\ell-1)/2} \wr S_2$ & None\\
& $\Csplus$ & $\ell \equiv 1$(4)\\
& $(\ell-1)/2.\SL_2(\F_3).2$ & $\ell \equiv 1$(24)\\
& $(\ell-1)/2.\GL_2(\F_3).2$ & $\ell \equiv 1$(24)\\
& $(\ell-1)/2.\widehat{S_4}.2$ & $\ell \equiv 1$(24)\\
& $(\ell-1)/2.\SL_2(\F_5).2$ & $\ell \equiv 1$(60)\\ 
& $\SL_2(\F_3) \wr S_2$ & $\ell \equiv 1$(48)\\
& $\widehat{S_4} \wr S_2$ & $\ell \equiv 1$(48)\\
& $\SL_2(\F_5) \wr S_2$ & $\ell \equiv 1$(120)\\
\hline
$\mathcal{C}_6$ & $2^{1+4}_{-}.O_4^{-}(2)$ & $\ell \equiv 1$(120)\\
& $2^{1+4}_{-}.3$ & $\ell \equiv 5$(24)\\
& $2^{1+4}_{-}.5$ & $\ell \equiv 5$(40)\\
& $2^{1+4}_{-}.S_3$ & $\ell \equiv 5$(24)\\
\hline
$\mathcal{S}$ & $2.S_6$ & $\ell \equiv 1$(120)\\
& $\SL_2(\F_5)$ & $\ell \equiv 1$(30)\\
& $\SL_2(\F_3)$ & $\ell \equiv 1$(24)\\
\hline
\end{tabular}
\vspace{0.3cm}
\caption{\label{tab:cullinan}Hasse subgroups of $\PGSp_4(\F_\ell)$. See \cite{cullinan2012symplectic} for the group-theoretic notation used in this table.}
\end{center}
\end{table}

At this point we may readily engineer Hasse surfaces over arbitrary number fields. For example, suppose we would like to construct an abelian surface $A$ whose mod-$\ell$ image satisfies $G_{A,\ell} \cap \Sp_4(\F_\ell) \cong \SL_2(\F_5)$ for some prime $\ell \equiv 1$ (mod 30); by \Cref{tab:cullinan}, this would give a Hasse surface. We would first take an abelian surface over $\Q$ with absolute endomorphism ring isomorphic to $\ZZ$; a quick search in the LMFDB yields the genus~$2$ curve \href{https://www.lmfdb.org/Genus2Curve/Q/249/a/249/1}{249.a.249.1}:

\[ \mathcal{C} : y^2 + (x^3 + 1)y = x^2 + x,\]

whose Jacobian variety $A$ has conductor $249$ and $\End_{\overline{\Q}}(A) \cong \ZZ$. Serre's Open Image Theorem, which also holds for abelian surfaces with absolute endomorphism ring $\ZZ$ \cite{hall2011open} ensures that, for all sufficiently large primes $\ell$, we have $G_{A,\ell} \cong \GSp_4(\F_\ell)$. Moreover, Dieulefait \cite{dieulefait2002explicit} provides an algorithm to determine a bound on the primes of non-maximal image. This algorithm has recently been implemented \cite{galreps} in Sage \cite{sagemath} at an ICERM workshop funded by the Simons collaboration, and for this $A$ we find that any prime $\ell \geq 11$ ensures maximal image. Choose such an $\ell$ which is congruent to $1$ (mod $30$), e.g. $\ell = 31$. We finally base-change $A$ to force $G_{A,\ell} \cap \Sp_4(\F_\ell) \cong \SL_2(\F_5)$, using the Galois correspondence.

\begin{example}
The Jacobian variety of the curve $\mathcal{C}$ above is a Hasse at $31$ surface over the number field $K$ such that $\Gal(\Q(A[31])/K) \cong \SL_2(\F_5)$.
\end{example}

\begin{remark}
We indicate here other work on this subject. These local-global type questions for abelian varieties go back to Katz in 1980 \cite{katz1980galois}, who studied the analogous local-global question for rational torsion points; for elliptic curves this goes even further back to the exercises in I-1.1 and IV-1.3 in Serre's seminal book \cite{serre1968abelian}. Etropolski \cite{etropolski2015local} considers a local-global question for arbitrary subgroups of $\GL_2(\F_\ell)$, and Vogt \cite{vogt2020local} generalises the prime-degree-isogeny problem to composite degree isogenies. Very recently Mayle \cite{mayle} bounds by $\frac{3}{4}$ the density of prime ideals for elliptic curves $E/K$ which do not satisfy either of the ``everywhere-local'' conditions for torsion or isogenies, and Cullinan, Kenney and Voight study a probabilistic version of the torsion local-global principle for elliptic curves \cite{cullinan2020probabilistic}.
\end{remark}

\section{Split modular abelian surfaces which are Hasse} \label{sec:decomposable_abelian_surfaces}

The example constructed in the last section raises the question of whether there are Hasse surfaces over $\Q$, pre-empting this somewhat contrived base-change method. In approaching this question, we establish the following lemma.

\begin{lemma}\label{main_result}
Let $A$ be an abelian surface over a number field $K$ whose mod-$\ell$ Galois image $G_{A,\ell}$ is contained in the direct sum of two subgroups $G, G'$ of $\GL_2(\F_\ell)$:

\[ G_{A,\ell} \subseteq \begin{pmatrix}
G & 0 \\
0 & G' 
\end{pmatrix}. \]

If one of $\left\{G,G'\right\}$ is Hasse, and the other is not contained in a Borel subgroup, then $A$ is a Hasse at $\ell$ surface over $K$.
\end{lemma}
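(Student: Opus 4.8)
The plan is to reduce everything to \Cref{prop:group_theoretic_reformulation}: it suffices to show that the projective mod-$\ell$ image $H_{A,\ell}$, viewed as a subgroup of $\PGL_4(\F_\ell)$ acting on $\PP^3(\F_\ell)=\PP(V)$ with $V=\F_\ell^4$ the space of $\bar{\rho}_{A,\ell}$, is Hasse in the sense of \Cref{sec:prelims}: (i) every element of $H_{A,\ell}$ fixes a point of $\PP^3(\F_\ell)$, and (ii) no point of $\PP^3(\F_\ell)$ is fixed by all of $H_{A,\ell}$. The block-diagonal hypothesis gives a $G_{A,\ell}$-stable decomposition $V=V_1\oplus V_2$ into two-dimensional subspaces, with $G_{A,\ell}$ acting on $V_i$ through the $i$-th factor; I will take $G$ and $G'$ to be precisely the images of $G_{A,\ell}$ under the two coordinate projections, and assume after relabelling that $G$ is the Hasse one and $G'$ is not contained in a Borel.

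For (i): given $h\in H_{A,\ell}$, I would lift it to $g=(g_1,g_2)\in G_{A,\ell}$, note $g_1\in G$, and use that $G$ is Hasse --- in particular that $G$ modulo scalars is a Hasse subgroup of $\PGL_2(\F_\ell)$, so every one of its elements fixes a point of $\PP^1(\F_\ell)$ --- to produce $0\neq v_1\in V_1$ with $g_1 v_1\in \F_\ell^\times v_1$. Viewing $v_1$ inside $V=V_1\oplus V_2$ with vanishing $V_2$-component, $g$ scales $v_1$, so $[v_1]\in\PP^3(\F_\ell)$ is fixed by $h$.

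For (ii): I would argue by contradiction. Suppose $[v]$ is fixed by all of $H_{A,\ell}$ and write $v=v_1+v_2$ with $v_i\in V_i$, so that for each $g=(g_1,g_2)\in G_{A,\ell}$ there is $\chi(g)\in\F_\ell^\times$ with $gv=\chi(g)v$. Comparing components (valid since the action respects $V=V_1\oplus V_2$) gives $g_1 v_1=\chi(g)v_1$ and $g_2 v_2=\chi(g)v_2$ for all $g$. As $v\neq 0$, either $v_1\neq 0$, in which case the line $\F_\ell v_1$ is fixed by all of $G$, so $G$ lies in a Borel subgroup of $\GL_2(\F_\ell)$ --- impossible, since a Hasse subgroup of $\PGL_2(\F_\ell)$ has no common fixed point on $\PP^1(\F_\ell)$ --- or else $v_1=0$ and $v_2\neq 0$, in which case the same reasoning puts $G'$ in a Borel, contradicting the hypothesis on $G'$. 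Either way (ii) holds, so $H_{A,\ell}$ is Hasse and, by \Cref{prop:group_theoretic_reformulation}, $A$ is Hasse at $\ell$.

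This is essentially elementary linear algebra on the decomposition $V=V_1\oplus V_2$, so I do not expect a serious obstacle. The one point to get right is that ``$G$'' and ``$G'$'' must be read as the images of $G_{A,\ell}$ in the two factors: the property ``not contained in a Borel'' is not inherited by subgroups (unlike ``every element fixes a point of $\PP^1$''), so the two ``not in a Borel'' conditions used in step (ii) have to be imposed on those images themselves rather than merely on overgroups of them, and the only genuine case distinction is whether the putative common fixed line meets $V_1$ --- when it does, the contradiction already comes from the Hasse factor.
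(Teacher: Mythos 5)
Your proof is correct and follows essentially the same route as the paper's: reduce to showing $H_{A,\ell}$ is Hasse via \Cref{prop:group_theoretic_reformulation}, use the Hasse factor to produce a fixed line in $V_1$ for each block-diagonal element, and rule out a common fixed point by decomposing a putative fixed vector into its two components and deriving a contradiction from whichever component is nonzero. Your closing caveat --- that $G$ and $G'$ must be read as the exact images of $G_{A,\ell}$ under the two projections, since neither ``Hasse'' nor ``not contained in a Borel'' passes to subgroups --- is a legitimate sharpening of the lemma's literal ``$\subseteq$'' hypothesis, and is consistent with how the paper actually applies it (the $G$, $G'$ in \Cref{cor:suff_conds_for_hasse} are chosen minimally).
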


\begin{proof}
Let $H_{A,\ell}, H, H'$ respectively denote the images of $G_{A,\ell}, G, G'$ modulo scalar matrices. By \Cref{prop:group_theoretic_reformulation}, we need to establish that $H_{A,\ell}$ is a Hasse subgroup. Since the mod-$\ell$ Galois representation in this case decomposes as a direct sum of two subrepresentations, we write $V, V'$ such that $A[\ell] = V \oplus V'$.

We first show that $H_{A,\ell}$ does not fix a point in $\PP(A[\ell])$. If it did, then that point lifts to a point $w \in A[\ell]$. We may write $w = v \oplus v'$, with $v \in V$, $v' \in V'$. Since at least one of $v, v'$ must be non-zero, we suppose that $v$ is non-zero. Then $H$ must fix the image of $v$ in $\PP(V)$, which is not allowed under the hypotheses on $\left\{G,G'\right\}$.

Without loss of generality we suppose that $H$ is Hasse. Each element of $H_{A,\ell}$ may be written as $y = \begin{pmatrix}
h & 0 \\
0 & h' 
\end{pmatrix}$ for $h \in H$, $h' \in H'$. Since $h$ fixes a point in $V$, $y$ fixes the same point; thus every element of $H_{A,\ell}$ fixes a point.
\end{proof}

An immediate corollary provides an example of a Hasse surface over $\Q$, using Sutherland's $j$-invariant defined above:

\begin{corollary}
Let $E/\Q$ be any elliptic curve with $j$-invariant $\frac{2268945}{128}$. Then the abelian surface $E^2$ is Hasse at $7$ over $\Q$.\qed
\end{corollary}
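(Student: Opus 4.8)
The plan is to deduce this directly from \Cref{main_result} applied to the abelian surface $A = E^2$. Here the mod-$7$ representation of $A$ is the direct sum of two copies of $\bar\rho_{E,7}$, so $A[7] = E[7] \oplus E[7]$ and $G_{A,7}$ is the diagonal image of $G_{E,7}$; in particular it is contained in $\begin{pmatrix} G & 0 \\ 0 & G' \end{pmatrix}$ with $G = G' = G_{E,7}$. It therefore suffices to verify the two hypotheses of \Cref{main_result} for this $G, G'$: that $G_{E,7}$ is Hasse, and that $G_{E,7}$ is not contained in a Borel subgroup of $\GL_2(\F_7)$.

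For the first point, I would invoke that whether an elliptic curve over $\Q$ is Hasse at $7$ depends only on its $j$-invariant, since $2268945/128 \notin \{0,1728\}$. Because $(7, 2268945/128)$ is Sutherland's exceptional pair for $\Q$, some Hasse at $7$ curve over $\Q$ has this $j$-invariant, and hence every such $E/\Q$ — in particular the one in the statement — is Hasse at $7$. By \Cref{prop:group_theoretic_reformulation} this is equivalent to $H_{E,7}$ being a Hasse subgroup of $\PGL_2(\F_7)$, which is exactly the assertion that $G_{E,7}$ is Hasse in the sense of the definition recalled in \Cref{sec:prelims}. For the second point, if $G_{E,7}$ were contained in a Borel subgroup then $E$ would admit a $\Q$-rational $7$-isogeny, namely the line in $E[7]$ stabilised by the Borel; but a Hasse at $7$ variety by definition admits no $K$-rational $7$-isogeny despite having one everywhere locally, a contradiction. (Equivalently, a Borel fixes a point of $\PP^1(\F_7)$, so membership in a Borel would contradict the second defining property of a Hasse subgroup.) With both hypotheses checked, \Cref{main_result} yields that $E^2$ is Hasse at $7$ over $\Q$.

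I expect no real obstacle here: the content is entirely a matter of unwinding definitions once \Cref{main_result} is available. The only step requiring a little care is the passage between the three guises of the Hasse property for $E$ — geometric (no rational $7$-isogeny despite everywhere-local ones), projective-representation-theoretic ($H_{E,7}$ Hasse in $\PGL_2(\F_7)$), and the ``$G$ Hasse'' language in which \Cref{main_result} is phrased — together with the observation that these are insensitive to the choice of model of $E/\Q$ within the $j$-invariant $2268945/128$.
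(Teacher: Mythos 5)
Your proposal is correct and follows exactly the paper's intended route: the corollary is stated as an immediate consequence of \Cref{main_result} applied to $E^2$ with $G = G' = G_{E,7}$, where the Hasse property of $G_{E,7}$ comes from Sutherland's exceptional pair and the non-containment in a Borel follows from the second defining property of a Hasse subgroup (no common fixed point in $\PP^1(\F_7)$). Your unwinding of the three equivalent guises of the Hasse property for $E$ is exactly the implicit content of the paper's one-line deduction.
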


This prompts the question of whether there exist \emph{simple} Hasse surfaces over $\Q$. We provide an affirmative answer to this question by restricting to the class of \emph{modular abelian surfaces} over $\Q$, whose definition we now recall.

Let $f$ be a weight~$2$ cuspidal newform of level $\Gamma_1(N)$ for some $N > 1$, with Fourier coefficient field $K_f$, a number field whose ring of integers we will denote as $\OO_f$. In the course of constructing the $\ell$-adic Galois representations of $f$, Shimura (Theorem 7.14 in \cite{shimura1971introduction}) defined the abelian variety $A_f$ associated to $f$, whose dimension is $[K_f:\Q]$. It is a theorem of Ribet (Corollary 4.2 in \cite{ribet1980twists}) that these abelian varieties are simple over $\Q$, and that $K_f$ is the full algebra of endomorphisms of $A_f$ which are defined over $\Q$. In this paper we refer to these varieties $A_f$ as \emph{modular abelian varieties}, and in the case where $[K_f:\Q] = 2$, we call them \emph{modular abelian surfaces}. (The reader is warned however that the adjective \emph{modular} is used by different authors throughout the literature to mean different things.)

Furthermore, these varieties are of \textbf{$\GL_2$-type}: the $\ell$-adic Tate module, for each $\ell$, splits as a direct sum

\[ T_\ell A_f = \bigoplus_{\lambda | \ell}T_{f,\lambda}, \]

where each $T_{f,\lambda}$ is a free module of rank~$2$ over the $\lambda$-adic completion $\OO_{f,\lambda}$ of $\OO_f$. (See Exercise 9.5.2 in \cite{diamond2005first}; to obtain the integrality one may need to replace $T_{f,\lambda}$ with a similar representation, as explained in the discussion immediately preceding Definition 9.6.10 in \emph{loc. cit.}. This decomposition is also explained in Section~$2$ of \cite{ribet1977galois}). This formula allows us to consider the $\ell$-adic representation $T_\ell A_f$ as a direct sum of the $2$-dimensional $\lambda$-adic representations associated to $f$.

Consider the case in which $K_f$ is a quadratic field, and $(\ell) = \lambda\lambda'$ splits in $\OO_f$. By taking the reduction mod $\ell$ of the above formula, we obtain a splitting

\[ A_f[\ell] = \overline{T}_{f,\lambda} \oplus \overline{T}_{f,\lambda'}\]

of the $4$-dimensional $G_\Q$-representation $A_f[\ell]$ as a sum of two $2$-dimensional representations, all considered as representations over $\F_\ell$. Thus $G_{A_f,\ell}$ is contained in the block sum of two subgroups $G,G'$ of $\GL_2(\F_\ell)$:

\[ G_{A_f,\ell} \subseteq \begin{pmatrix}
G & 0 \\
0 & G' 
\end{pmatrix}. \]

We choose $G$ and $G'$ minimally; i.e., $G$ is the image of $G_\Q$ acting on $\overline{T}_{f,\lambda}$, and $G'$ the image of $G_\Q$ acting on $\overline{T}_{f,\lambda'}$. We denote by $H$ and $H'$ the corresponding projective images, as subgroups of $\PGL_2(\F_\ell)$.

We may therefore state sufficient conditions for a modular abelian surface $A_f$ to be Hasse, as a corollary of \Cref{prop:hasse_elliptic} and \Cref{main_result} above:

\begin{corollary}\label{cor:suff_conds_for_hasse}
Let $f$ be a weight $2$ newform of level $\Gamma_1(N)$ with Fourier coefficient field $K_f$. Suppose:

\begin{itemize}
	\item
	$K_f$ is a quadratic field;
	\item
	$\ell \geq 7$ is a prime congruent to $3 \Mod{4}$ which splits in $\OO_f$ as $(\ell) = \lambda\lambda'$;
	\item
	among the projective mod-$\lambda$ and mod-$\lambda'$ images, one is isomorphic to $D_{2n}$, where $n > 1$ is an odd divisor of $\frac{l-1}{2}$, and the other is not contained in a Borel subgroup.
\end{itemize}

Then $A_f$ is Hasse at $\ell$ over $\Q$.\qed

\end{corollary}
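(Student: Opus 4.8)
The plan is to reduce the statement to \Cref{main_result}. Of that lemma's hypotheses for the abelian surface $A_f$, two hold for free: the block-diagonal containment $G_{A_f,\ell}\subseteq\begin{psmallmatrix}G&0\\0&G'\end{psmallmatrix}$, with $G,G'$ the images of $G_\Q$ acting on $\overline{T}_{f,\lambda}$ and $\overline{T}_{f,\lambda'}$, is the decomposition recorded just before the statement, and the requirement that the ``non-dihedral'' block not lie in a Borel subgroup is assumed outright. So everything comes down to showing that the block $G$ whose projective image $H$ is isomorphic to $D_{2n}$ --- where $n>1$ is an odd divisor of $(\ell-1)/2$ and $\ell\equiv 3\Mod{4}$ --- is a Hasse subgroup of $\GL_2(\F_\ell)$; given this, \Cref{main_result} yields that $A_f$ is Hasse at $\ell$ over $\Q$.

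To prove $G$ is Hasse I would appeal to the sufficiency direction of \Cref{prop:hasse_elliptic}: by the Remark following it, conditions (1) and (2) there already suffice, and --- this is the point to verify --- the argument for this (Section~7 of \cite{BC13}) is, underneath, a statement about a $2$-dimensional \emph{odd} mod-$\ell$ representation of $G_\Q$ with projective image $D_{2n}$ of the indicated type, rather than about an elliptic curve as such. The representation $\overline{T}_{f,\lambda}$ satisfies these hypotheses: it is odd because $f$ has weight $2$, so its nebentypus $\varepsilon$ has $\varepsilon(-1)=1$, whence $\det\bar\rho_{f,\lambda}(c_\infty)=\bar\chi_\ell(c_\infty)\,\varepsilon(-1)=-1$ for a complex conjugation $c_\infty$; its projective image is of the required dihedral type by assumption; and the hypothesis $\sqrt{\ell^\ast}\notin K$ of \Cref{prop:hasse_elliptic} holds automatically for $K=\Q$.

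Spelled out, the group-theoretic content is twofold. First, $H\cong D_{2n}$ with $n\ge 3$ is nonabelian while $|H|=2n$ is coprime to $\ell$; since any subgroup of a Borel of $\PGL_2(\F_\ell)$ of order prime to $\ell$ embeds into the torus quotient and is hence abelian, $H$ lies in no Borel and so fixes no point of $\PP^1(\F_\ell)$. Second, every element of $H$ does fix a point of $\PP^1(\F_\ell)$: the cyclic subgroup $C_n$ is conjugate into a split Cartan (as $n\mid\ell-1$ and $n\ge 3$), so its elements each fix two points; and the $n$ reflections of $D_{2n}$ form a single conjugacy class when $n$ is odd, one of which is the image of $c_\infty$ --- a non-scalar involution (non-scalar since its determinant is $-1$) with $\F_\ell$-rational eigenvalues $\pm1$, hence fixing two points of $\PP^1(\F_\ell)$ --- so, conjugation by $\F_\ell$-rational matrices sending rational fixed points to rational fixed points, every reflection fixes a point. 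Hence $H$, equivalently $G$, is Hasse, and \Cref{main_result} applies.

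I expect the main obstacle to be bookkeeping rather than a genuinely hard step: the crux is being sure that the sufficiency argument for \Cref{prop:hasse_elliptic} draws only on the projective image, the oddness of the representation, and $\ell\equiv 3\Mod{4}$, with nothing peculiar to elliptic curves --- equivalently, checking the reflection-conjugacy observation above and the fact, forced by oddness together with $n$ odd, that the image of complex conjugation is one of the $n$ reflections of $D_{2n}$ (and therefore splits). One small technical point: although \Cref{main_result} and the underlying \Cref{prop:group_theoretic_reformulation} are stated for principally polarised abelian surfaces, the Hasse criterion involves only the faithful action of the projective image on $\PP^3(\F_\ell)$, so it is not an obstruction for $A_f$.
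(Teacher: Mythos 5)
Your proposal is correct and follows the paper's intended route exactly: the paper derives the corollary immediately from \Cref{main_result} together with the converse direction of \Cref{prop:hasse_elliptic} (which, per the remark there, needs only conditions (1) and (2)). You additionally, and correctly, verify the one point the paper leaves implicit --- that the sufficiency argument is purely group-theoretic, depending only on the projective image being $D_{2n}$ of the stated type and on the oddness of $\overline{T}_{f,\lambda}$ (which forces the image of complex conjugation to be a point-fixing reflection), so it transfers from elliptic curves to the $\lambda$-component of $A_f[\ell]$.
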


\begin{remark}
We do not deal with the case of $\ell$ remaining inert or ramifying in $\OO_f$ in this paper. This would likely involve a group-theoretic investigation of the Hasse subgroups of $\PGL_2(\F_{\ell^n})$.
\end{remark}

In the next section we apply an algorithm of Anni \cite{AnniThesis} which determines when a weight $k$ newform has projective dihedral image, in order to find an $f$ satisfying the assumptions in the above corollary.

We end this section with a result which gives sufficient conditions on $f$ to ensure that both the mod-$\lambda$ and mod-$\lambda'$ images are isomorphic. This enables us, in certain situations, to consider the image for only one of the prime ideals above $\ell$. Recall that the Fourier coefficient field of a newform $f$ is either totally real, or a CM field.

\begin{proposition}\label{prop:iso_image}
Let $f$ be a weight $2$ newform of level $\Gamma_1(N)$ and Fourier coefficient field $K_f$. Suppose that $K_f$ is an imaginary quadratic field, and $\ell$ splits in $\mathcal{O}_f$ as $(\ell) = \lambda\lambda'$. Then the projective mod-$\lambda$ and mod-$\lambda'$ images are isomorphic.
\end{proposition}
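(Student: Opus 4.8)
The plan is to exploit the fact that complex conjugation on the CM field $K_f$ swaps the two primes $\lambda, \lambda'$ above $\ell$, and to show that this induces an isomorphism between the mod-$\lambda$ and mod-$\lambda'$ representations of $f$ after composing with an appropriate twist. Concretely, since $K_f$ is a CM field, complex conjugation $c \in \Aut(K_f/\Q)$ restricts to the nontrivial automorphism of the imaginary quadratic field $K_f$, and the Fourier expansion of the complex-conjugate form $\bar f$ (obtained by applying $c$ to all Fourier coefficients $a_n(f)$) is again a weight~$2$ newform of level $\Gamma_1(N)$. A classical fact (see e.g. the discussion of inner twists in Ribet \cite{ribet1980twists,ribet1977galois}) is that $\bar f$ is a twist of $f$ by a Dirichlet character: $\bar f = f \otimes \chi$ for some character $\chi$ of finite order, because $A_f$ is simple over $\Q$ and $\bar f$ has the same $L$-function-up-to-Euler-factors, hence the same associated abelian variety $A_{\bar f} = A_f$ up to isogeny. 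In fact, for a newform with CM by an imaginary quadratic field, the relation $\bar f = f \otimes \chi$ holds with $\chi$ the quadratic character cutting out that CM field together with the inverse of the nebentypus, but the precise shape of $\chi$ will not matter.

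The key step is then to chase the decomposition $T_\ell A_f = T_{f,\lambda} \oplus T_{f,\lambda'}$ through the action of $c$. Applying $c$ to the coefficient field sends the $\lambda$-adic Tate module to the $\lambda'$-adic one; equivalently, the semisimple two-dimensional $\F_\ell$-representations $\overline{T}_{f,\lambda}$ and $\overline{T}_{f,\lambda'}$ are related by $\overline{T}_{f,\lambda'} \cong \overline{T}_{f,\lambda} \otimes \bar\chi$, where $\bar\chi$ is the reduction mod $\ell$ of the twisting character, interpreted suitably. This is because both sides are the reduction of the $2$-dimensional $\lambda'$-adic representation of $f$, which by $\bar f = f\otimes\chi$ equals the reduction of the $\lambda$-adic representation of $f$ tensored with $\chi$. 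Since tensoring by a character is trivial in $\PGL_2(\F_\ell)$, passing to the projective image kills the twist: the projective mod-$\lambda$ image $H$ and projective mod-$\lambda'$ image $H'$ are therefore equal as subgroups of $\PGL_2(\F_\ell)$ (up to conjugacy), which is the claim.

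I would structure the write-up as: (i) recall that $K_f$ CM forces $\bar f$ to be a newform of the same level and, by simplicity of $A_f$ and Ribet's results, an inner twist $\bar f = f \otimes \chi$; (ii) identify $\Gal(K_f/\Q) = \langle c\rangle$ with the Galois action permuting $\{\lambda,\lambda'\}$, so that $\sigma_{f,\lambda'} \cong \sigma_{f,\lambda}\otimes\chi$ as $\OO_{f,\lambda}$-representations after identifying $\OO_{f,\lambda}\cong\OO_{f,\lambda'}$ via $c$; (iii) reduce mod $\ell$ and pass to $\PGL_2$, noting the twist becomes trivial, to conclude $H \cong H'$. The main obstacle I anticipate is making step (ii) precise: one has to be careful that $\overline{T}_{f,\lambda}$ and $\overline{T}_{f,\lambda'}$ are not literally the same representation but are matched only after the ring identification coming from $c$, and one must check that the character $\chi$ really does arise (rather than merely an abstract isomorphism of $L$-series), for which the cleanest route is to invoke that a CM newform's nebentypus-twisted complex conjugate is literally the form itself, so $\chi$ is explicit. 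Once that compatibility is nailed down, reducing modulo $\ell$ and projectivising is routine.
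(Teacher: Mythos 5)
Your overall strategy is the same as the paper's: use the relation between $f$ and its complex conjugate $\bar f$ to identify $\rho_{f,\lambda'}$ with a character twist of $\rho_{f,\lambda}$, note that complex conjugation swaps $\lambda$ and $\lambda'$ because $K_f$ is imaginary quadratic, and observe that the twist dies upon projectivising. The paper does exactly this, via the identity $\bar f = f \otimes \epsilon^{-1}$ with $\epsilon$ the Nebentypus.

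Two points in your justification of that identity need repair, however. First, your primary argument --- that $A_{\bar f}$ is isogenous to $A_f$ and therefore $\bar f = f \otimes \chi$ for some Dirichlet character $\chi$ --- does not work: Galois-conjugate newforms always give the same $A_f$ up to isogeny, but a Galois conjugate of $f$ need not be a character twist of $f$ (take any non-CM newform with real quadratic coefficient field and no nontrivial inner twists). Second, your fallback justification invokes properties of \emph{CM newforms}, but the proposition's hypothesis is only that the \emph{coefficient field} $K_f$ is imaginary quadratic; $f$ itself need not have CM (and the absolutely simple examples in the paper do not). The correct statement, valid for \emph{every} newform, is $\bar f = f \otimes \epsilon^{-1}$, a consequence of the adjointness of the Hecke operators with respect to the Petersson inner product (see \S 1 or the proof of Proposition~3.2 of \cite{ribet1977galois}); since $K_f$ is imaginary, $\epsilon^{-1}$ is a nontrivial inner twist carrying $f$ to its conjugate and hence interchanging $\lambda$ and $\lambda'$. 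With that substitution your steps (ii) and (iii) go through and the proof coincides with the paper's.
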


\begin{proof}
Denoting by $\epsilon$ the Nebentypus of $f$, observe that we have the following relation:

\[ \bar{f} = f \otimes \epsilon^{-1}, \]

where the bar denotes complex conjugation (see e.g. \S~1 or the proof of Proposition~3.2 in \cite{ribet1977galois}). Since $K_f$ is imaginary, this gives a non-trivial element in the group of inner twists of $f$, which sends $f$ to its Galois conjugate, swaps $\lambda$ and $\lambda'$, and induces an isomorphism $\rho_{f,\lambda'} \cong \rho_{\bar{f},\lambda}$. We conclude by observing that $f$ and $f \otimes \epsilon^{-1}$ have isomorphic projective mod-$\lambda$ image.
\end{proof}

\begin{remark}
In the case that $f$ does not have CM, the assumption in the above proposition that $K_f$ is a CM field is equivalent to the assumption that the Nebentypus of $f$ is not trivial (c.f. Example~3.7 in \cite{ribet1980twists}).
\end{remark}

\section{Constructing examples using Anni's thesis}\label{sec:find_examples_using_code}

Section 10.1 of \cite{AnniThesis} describes an algorithm (Algorithm 10.1.3 in \emph{loc. cit.}) to determine whether or not a weight $k$ newform has projective dihedral image modulo a prime ideal $\lambda$ of the ring of integers $\OO_f$ of $K_f$. The main idea can be encapsulated in the following:

\begin{proposition}[Anni, Ribet, Serre]
Let $f$ be a weight $k$ newform of level $N$, and let $\rho$ be the mod-$\lambda$ Galois representation associated to $f$. Assume that $\rho$ is irreducible. Then the following are equivalent:

\begin{enumerate}
	\item
	$\rho$ has projective dihedral image;
	\item
	there exists a quadratic character $\alpha$ of modulus $q$ such that $\alpha \otimes \rho \cong \rho$, where $q$ is the product of all primes dividing $N$ such that their square divides $N$;
	\item
	there exists a quadratic field $K$, and characters $\chi, \chi'$ on $G_K$, such that the restriction of $\rho$ to $G_K$ is reducible:
	\[ \rho|_{G_K} = \chi \oplus \chi'. \]
\end{enumerate}

Moreover, if these hold, then the order of the dihedral group is $2n$, where $n$ is the order of $\chi^{-1}\chi'$.
\end{proposition}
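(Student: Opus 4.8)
The plan is to show that conditions (1), (2), (3) are all equivalent to the single assertion

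$(\star)$: $\rho \cong \Ind_{G_K}^{G_\Q}\chi$ for some quadratic field $K/\Q$ and some character $\chi$ of $G_K$ with $\chi \ne \chi^\sigma$, where $\sigma$ generates $\Gal(K/\Q)$;

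and then to read off the order of the dihedral group from this description. Throughout, $\rho$ is irreducible (as assumed) and $\ell$ is odd (the prime $2$ can be dealt with directly). The equivalences of $(\star)$ with (1) and with (3) are essentially formal Clifford theory over $\F_\ell$; the equivalence with (2) additionally needs a computation of the conductor of the twisting character, and that is where I expect the real work to be.

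For $(3)\Leftrightarrow(\star)$ and $(1)\Leftrightarrow(\star)$: given (3), choose a lift $\sigma\in G_\Q$ of the generator of $\Gal(K/\Q)$. Irreducibility of $\rho$ prevents $\sigma$ from stabilising the eigenlines of $\rho|_{G_K}$, and the degenerate possibility $\chi=\chi'$ is excluded because it would make $\rho|_{G_K}$ scalar, hence force the projective image of $G_\Q$ to have order $\le 2$ and $\rho$ to be reducible. So $\chi'=\chi^\sigma\ne\chi$, and Frobenius reciprocity gives $0\ne\Hom_{G_K}(\rho|_{G_K},\chi)=\Hom_{G_\Q}(\rho,\Ind_{G_K}^{G_\Q}\chi)$; both representations being irreducible of dimension $2$, they coincide, which is $(\star)$. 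The converse is immediate from $(\Ind_{G_K}^{G_\Q}\chi)|_{G_K}=\chi\oplus\chi^\sigma$. Writing $\Ind_{G_K}^{G_\Q}\chi$ in a basis adapted to the two $G_K$-cosets, the image of $G_K$ modulo scalars is cyclic, generated by the classes of $\mathrm{diag}(\chi^{1-\sigma}(g),1)$, while $\sigma$ acts by an anti-diagonal matrix inverting this subgroup; hence the projective image $H$ is dihedral of order $2n$ with $n=\ord(\chi^{1-\sigma})=\ord(\chi^{-1}\chi')$, which is (1) together with the order statement. Conversely, if $H\cong D_{2n}$ with $n>1$, then — since $\rho$ is irreducible, $H$ lies in the normaliser of a Cartan subgroup, so the rotation subgroup of $H$ has order $n$ prime to $\ell$ — I would take $K$ to be the fixed field of the preimage in $G_\Q$ of the cyclic, index-$2$ rotation subgroup; then $\rho|_{G_K}$ has abelian image of order prime to $\ell$, hence is diagonalisable, giving (3).

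For $(2)\Leftrightarrow(\star)$: if $(\star)$ holds, let $\alpha$ be the quadratic character cut out by $K$; the projection formula gives $\rho\otimes\alpha\cong\Ind_{G_K}^{G_\Q}\bigl(\chi\cdot(\alpha|_{G_K})\bigr)=\Ind_{G_K}^{G_\Q}\chi\cong\rho$. Conversely, if $\rho\otimes\alpha\cong\rho$ for a nontrivial quadratic character $\alpha$, pick $M\in\GL_2(\overline{\F}_\ell)$ with $M\rho(g)M^{-1}=\alpha(g)\rho(g)$ for all $g$; then $M^2$ centralises $\rho$ and so is scalar by Schur, whence $M$ is semisimple, and $M$ is non-scalar because $\alpha\ne 1$; the kernel $G_K$ of $\alpha$ centralises $M$, hence preserves its two eigenlines, so $\rho|_{G_K}$ decomposes as a sum of two characters — this is (3), hence $(\star)$. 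It remains to pin down the modulus, i.e.\ to show that the conductor of $\alpha$ divides $q=\prod_{p^2\mid N}p$, and this is the step I expect to be the main obstacle. I would argue prime by prime: $\alpha$ must be unramified at every $p\nmid N\ell$, since otherwise $\rho\otimes\alpha$ would ramify at $p$ while $\rho$ does not; and at a prime $p$ with $p\mid N$ but $p^2\nmid N$, the local representation $\rho|_{D_p}$ has conductor exponent $\le 1$, and a case analysis of the possible tame inertial types there (special/Steinberg type versus ramified principal series type) shows that tensoring with a character ramified at $p$ strictly raises the conductor exponent at $p$, contradicting $\rho\otimes\alpha\cong\rho$; the prime $\ell$ is handled in the same spirit using the tame (or finite-flat) structure of $\rho|_{D_\ell}$. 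Hence $\alpha$ ramifies only at primes whose square divides $N$, so it has modulus $q$. This conductor bookkeeping is precisely the reduction underlying Anni's Algorithm 10.1.3 in \cite{AnniThesis}; by contrast the three equivalences with $(\star)$ are purely formal.
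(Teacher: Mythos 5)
The paper does not actually prove this proposition: it is stated as a compilation of results from Ribet (Prop.~4.4 and Thm.~4.5 of \cite{ribet1977galois}), Serre (\S 7 of \cite{serre1977modular}) and Anni (\S 10.1 of \cite{AnniThesis}), and the reader is referred to those sources for details. So your write-up supplies an argument where the paper gives only citations. The Clifford-theoretic core of your proposal --- the equivalence of (1), (3) and the induced description $(\star)$, the computation $n=\ord(\chi^{-1}\chi')$, the projection-formula direction of (2), and the Schur-lemma argument for $(2)\Rightarrow(3)$ --- is correct and is essentially the content of the cited results of Ribet and Serre. (Two small caveats: you need absolute irreducibility, i.e.\ to work over $\overline{\F}_\ell$, both for Schur's lemma and to exclude $\chi=\chi'$; and the cleanest justification that an irreducible dihedral subgroup of $\PGL_2(\overline{\F}_\ell)$ lies in the normaliser of a Cartan is Dickson's classification.)

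The genuine gap is exactly where you predicted it: the claim that $\alpha$ has modulus $q=\prod_{p^2\mid N}p$. Your assertion that at a prime $p$ with $p\,\|\,N$ ``tensoring with a character ramified at $p$ strictly raises the conductor exponent'' fails in the ramified principal series case: if $\bar{\rho}|_{I_p}\cong 1\oplus\mu$ with $\mu|_{I_p}=\alpha|_{I_p}$ quadratic (which can occur precisely when $p$ exactly divides both $N$ and the conductor of the nebentypus), then $(\bar{\rho}\otimes\alpha)|_{I_p}\cong\alpha\oplus 1$ has the \emph{same} conductor exponent, so conductor comparison alone does not exclude ramification of $\alpha$ at $p$; the references handle this point by bringing the nebentypus into the analysis. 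More seriously, the prime $\ell$ cannot be ``handled in the same spirit'': the level of a mod-$\ell$ eigenform does not record ramification at $\ell$ at all, and the quadratic field arising in this very paper (\S 6) is $\Q(\sqrt{-\ell})$, which \emph{is} ramified at $\ell$. Ruling out a contribution of $\ell$ to $\mathrm{cond}(\alpha)$ beyond what $q$ permits requires the explicit description of $\bar{\rho}|_{I_\ell}$ via fundamental characters (finite flatness in weight $2$), i.e.\ the weight enters essentially, and for general weight $k$ this step needs real care. So the conductor bookkeeping is not a routine tame case analysis; it is the substantive input from Ribet and Anni that your sketch leaves open.
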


We refer to the relevant results in the literature for more details: in chronological order, Proposition 4.4 and Theorem 4.5 in \cite{ribet1977galois}, Section 7 of \cite{serre1977modular}, and Section 10.1 of \cite{AnniThesis}.

Anni's algorithm then consists in checking whether one of the finitely many Dirichlet characters as described in (2) above satisfies $\alpha \otimes \rho \cong \rho$, noting that only the primes up to the Sturm bound need to be checked.

At this point, if Anni's algorithm yields a quadratic character for such a newform $f$, then either it has projective dihedral image, \emph{or} the representation is reducible, which would mean it has cyclic image. This reducible case is equivalent to $f$ being congruent mod-$\ell$ to an Eisenstein series of the same weight and level, which may be checked by computing the finitely many normalised Eisenstein series.

If the representation is indeed dihedral, then we compute the characteristic polynomials of Frobenius at several rational primes to determine its order.

We implemented this algorithm in Sage (\verb|find_dihedral.sage| in \cite{dihedralnewforms2020}), and ran it on all two-dimensional weight-two newforms $f$ (those with $[K_f:\Q] = 2)$ of level $\leq 189$, for which the prime $7$ splits in $\OO_f$. 

The results obtained are summarised in \Cref{tab:dihedral-newforms}. We found that all of the forms had CM, and that the projective images in all of these cases were isomorphic for each of the prime ideals above $7$, as is necessarily the case in light of \Cref{prop:iso_image}. To save space in the table, we note here that the Fourier coefficient field of all of these newforms is the quadratic field $\Q(\sqrt{-3})$, and remind the reader that with $D_n$ we mean the dihedral group \emph{of order $n$} (and not $2n$).

\begin{table}[htp]
\begin{center}
\begin{tabular}{|c|c|c|c|c|}
\hline
LMFDB Label & CM field & $q$-expansion & $\PP\rho(G_\Q)$\\
\hline
49.2.c.a & $\Q(\sqrt{-7})$ & $q - \zeta_6q^2 + (1 - \zeta_6)q^4 - 3q^8 + 3\zeta_6q^9 + O(q^{10})$ & $C_3$\\
\hline
63.2.e.a & $\Q(\sqrt{-3})$ & $q + 2\zeta_6q^4 + (1 - 3\zeta_6)q^7 + O(q^{10})$ & $D_4$\\
\hline
81.2.c.a & $\Q(\sqrt{-3})$ & $q + 2\zeta_6q^4 + (1 - \zeta_6)q^7 + O(q^{10})$ & $D_{12}$\\
\hline
117.2.g.a & $\Q(\sqrt{-3})$ & $q + 2\zeta_6q^4 + \zeta_6q^7 + O(q^{10})$ & $D_{12}$\\
\hline
117.2.q.b & $\Q(\sqrt{-3})$ & $q - 2\zeta_6q^4 + (6 - 3\zeta_6)q^7 + O(q^{10})$ & $D_{12}$\\
\hline
189.2.c.a & $\Q(\sqrt{-3})$ & $q + 2q^4 + (-1 + 3\zeta_6)q^7 + O(q^{10})$ & $D_6$\\
\hline
189.2.e.b & $\Q(\sqrt{-3})$ & $q + (2 - 2\zeta_6)q^4 + (1 - 3\zeta_6)q^7 + O(q^{10})$ & $D_{12}$\\
\hline
189.2.p.a & $\Q(\sqrt{-3})$ & $q + (-2 + 2\zeta_6)q^4 + (-1 + 3\zeta_6)q^7 + O(q^{10})$ & $D_6$\\
\hline
\end{tabular}
\vspace{0.3cm}
\caption{\label{tab:dihedral-newforms}Newforms arising as output from Anni's algorithm. The two forms with projective image $D_6$ yield Hasse surfaces over $\Q$ at $7$.}
\end{center}
\end{table}

For the newforms in the table whose level is prime to $7$, we verified the irreducibility of the mod-$\lambda$ Galois representation with Corollary~2.2 of \cite{dieulefait2001newforms}: if it was reducible, then there would exist a Dirichlet character $\chi$ of conductor dividing the level and valued in $\F_7^\times$ such that, for all primes $p$ away from the level, we would have

\[ a_p \equiv \chi(p) + p\frac{\epsilon(p)}{\chi(p)} \Mod{\lambda},\]

where $\epsilon$ is the Nebentypus of $f$. Since there are only finitely many such $\chi$, we can test all possible candidates, and find that none of them satisfy all of these congruences, whence the representation must be irreducible.

The last example in the above table is given in \Cref{example:cm_hasse}. Since it is a CM form, the corresponding abelian variety $A_f$ decomposes over $\overline{\Q}$ as the square of a CM elliptic curve $E$. We may quickly glean further information about $E$ from the \href{https://www.lmfdb.org/ModularForm/GL2/Q/holomorphic/189/2/p/a/}{homepage} of this form; in particular, from the ``Related objects'' section we find that the decomposition occurs over the field $\Q(\sqrt{-3})$, and that $E$ is a curve with $j_E = 0$, whose mod-$7$ Galois image is a split Cartan subgroup (and not its normaliser). By Sutherland's work \Cref{prop:hasse_elliptic}, we may conclude that $E$ is not a Hasse curve.

\section{Finding absolutely simple Hasse Modular Abelian surfaces}\label{sec:abs_simple_hasse}

We first collect some facts about absolutely simple modular abelian varieties from the literature.

\begin{proposition}[Cremona, Jordan, Ribet] \label{lem:abs_endo}
Let $f$ be a weight $2$ newform of level $\Gamma_1(N)$ such that the corresponding modular abelian surface $A_f$ is Hasse at some prime $\ell$ which splits completely in $\OO_f$. Assume that $f$ is not a CM newform.

\begin{itemize}
	\item
	If $f$ does not have inner twists, then $A_f$ is absolutely simple.
	\item
	If $f$ does have inner twists, then $A_f$ is absolutely simple if and only if $\End_{\overline{\Q}}^0(A_f)$ is an indefinite quaternion division algebra with centre $\Q$ of degree $4$ over $\Q$. Moreover, if this holds, then this algebra is realised over a totally complex field, $A_f$ has potential good reduction everywhere, and for every prime $p$ dividing $N$, we have $\ord_p(N) \geq 2$.
\end{itemize}
\end{proposition}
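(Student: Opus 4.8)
The plan is to deduce everything from three ingredients: Ribet's description of $\End_{\overline{\Q}}^0(A_f)$ in terms of the inner twists of $f$, Albert's classification of endomorphism algebras of abelian surfaces in characteristic $0$, and the standard structure theory of abelian surfaces with quaternionic multiplication (QM). The assumption that $A_f$ is Hasse at a prime splitting completely in $\OO_f$ is not needed for the structural assertions that follow; it is recorded only to match the setting in which the proposition is applied.

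First I would dispose of the case with no inner twists. By Ribet's theorem that the endomorphisms gained by $A_f$ over $\overline{\Q}$ are exactly those forced by inner twists of $f$, a non-CM newform $f$ with no inner twists has $\End_{\overline{\Q}}^0(A_f)=\End_{\Q}^0(A_f)=K_f$. Since $K_f$ is a quadratic field and $\dim A_f=2$, and a field contains no non-trivial idempotents, $A_{f,\overline{\Q}}$ admits no non-trivial product decomposition, so $A_f$ is absolutely simple. For the second bullet the reverse implication is the same idempotent argument applied to a division algebra. For the forward implication, suppose $f$ has a non-trivial inner twist; as $\dim A_f=2$ the coefficient field $K_f$ is quadratic, the inner-twist group is the whole of $\Gal(K_f/\Q)$ with fixed field $\Q$, and Ribet's formalism then gives that $\End_{\overline{\Q}}^0(A_f)$ is a central simple $\Q$-algebra which, because $f$ is not CM and $\dim A_f=2$, is $4$-dimensional --- hence a quaternion algebra over $\Q$, either $M_2(\Q)$ or a division algebra. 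Absolute simplicity rules out $M_2(\Q)$ (which forces $A_{f,\overline{\Q}}\sim E^2$ for an elliptic curve $E$), and Albert's classification forces a quaternion division $\Q$-algebra that occurs as $\End^0$ of an abelian surface to be indefinite. This gives the equivalence.

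It remains to prove the three ``moreover'' assertions, so assume $\End_{\overline{\Q}}^0(A_f)=D$, an indefinite quaternion division $\Q$-algebra. The conjugation action of $G_{\Q}$ on $D$ is by $\Q$-algebra automorphisms with fixed subalgebra $K_f$; by Skolem--Noether it is inner, and since the fixed algebra is exactly the quadratic subfield $K_f$ the image of $G_{\Q}$ in $D^\times/\Q^\times$ is cyclic of order $2$, so all endomorphisms of $A_f$ become defined over a single quadratic field $L$. Then $A_{f,L}$, with its $\Q$-rational polarisation and the induced action of a maximal order of $D$, defines a point of the Shimura curve attached to $D$; as $D$ is a division algebra this curve has no real points (Shimura), so $L$ has no real place and is therefore totally complex. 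For potential good reduction I would fix a prime $p$, pass to a finite $L'/\Q$ over which $A_f$ has semistable reduction above $p$ and all endomorphisms are defined, and examine the connected N\'eron special fibre $0\to T\to G^0\to B'\to 0$ at a place above $p$, with $T$ a torus of rank $r$: the action of a maximal order of $D$ preserves the canonical subtorus $T$ and hence embeds $D$ into $\End^0(T)\times\End^0(B')$; if $r=1$ this forces $D$ into $\End^0(B')$ for an elliptic curve $B'$ over a finite field, impossible because a $4$-dimensional division $\Q$-algebra embeds into $\End^0$ of an elliptic curve only if it equals the definite quaternion algebra ramified at $p$ and $\infty$, whereas $D$ is indefinite; and if $r=2$ the variety $A_{f,L'}$ is totally degenerate, so $D$ embeds into $M_2(\Q)$ through the character lattice of the uniformising torus, again impossible for a division algebra. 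Hence $r=0$, so $A_{f,L'}$ has good reduction at $p$; as $p$ is arbitrary, $A_f$ has potential good reduction everywhere.

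The last assertion, $\ord_p(N)\ge 2$ for $p\mid N$, is the main obstacle. Potential good reduction at $p$ rules out multiplicative reduction, so the local component $\pi_p$ of $f$ is not an unramified twist of the Steinberg representation; the only remaining way to have $\ord_p(N)=a(\pi_p)=1$ is for $\pi_p$ to be a ramified principal series of conductor $(1,0)$, that is, for $\rho_{f,\lambda}|_{G_{\Q_p}}$ to be the direct sum of an unramified character and a tamely ramified one. Excluding this requires genuinely using the quaternionic structure: after the quadratic base change to $L$ and a further tame base change one reaches good reduction whose $\lambda$-adic representation is a sum of two distinct unramified characters, and one must argue that this is incompatible with the reduction carrying a faithful action of the division algebra $D$ --- a point where the failure of endomorphisms to lift under specialisation has to be handled with care, and which is precisely the conductor computation for QM abelian surfaces in the work of Jordan and of Cremona that I would invoke for the details. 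A lesser point requiring attention is to extract cleanly from Ribet's inner-twist formalism the statement that a non-trivial inner twist really does enlarge $\End_{\overline{\Q}}^0(A_f)$ beyond $K_f$ and makes it $4$-dimensional.
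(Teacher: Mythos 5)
Your proposal is correct and follows the same essential route as the paper: the dichotomy in the two bullet points is extracted from Ribet's inner-twist structure theory exactly as in the paper's proof, which records that $\End_{\overline{\Q}}^0(A_f)\cong M_n(\cdot)$ with centre the fixed field $F$ of the inner twists, that $\deg_{\Q}\End_{\overline{\Q}}^0(A_f)=2[K_f:F]$ and $[K_f:F]=nt$, so that ``no inner twists'' gives $n=1$ and ``non-trivial inner twists'' gives $n=1\Leftrightarrow t=2$. Your idempotent phrasing and the paper's $n=1$ criterion are the same argument. The one genuine divergence is in the ``moreover'' clauses: the paper simply observes that $A_f$ base-changed to the field of definition of all endomorphisms is a fake elliptic curve and cites Jordan for the totally complex field of definition and potential good reduction, and cites Cremona's Theorem~3 for $\ord_p(N)\ge 2$, whereas you sketch direct arguments (Skolem--Noether plus the absence of real points on Shimura curves; a N\'eron-model/toric-part argument against bad reduction). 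Those sketches are sound in outline and buy some self-containedness, but you correctly identify that the conductor bound $\ord_p(N)\ge 2$ is not fully recovered by them --- excluding a ramified principal series of conductor exponent $1$ genuinely requires the local analysis of QM surfaces --- and at that point you defer to Jordan and Cremona, which is precisely what the paper does. You are also right that the Hasse hypothesis plays no role in the proof; it is only part of the ambient setting.
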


\begin{proof}
Write $\mathcal{X} = \End_{\overline{\Q}}^0(A_f)$. We have the following facts:

\begin{enumerate}
	\item
	the centre of $\mathcal{X}$ is a subfield $F$ of $K_f$, and  $\mathcal{X} \cong M_n(\cdot)$, where $\cdot$ is either $F$, or else an indefinite quaternion division algebra over $F$ of dimension $t^2$ over $F$, where $t$ is the Schur index of $\mathcal{X}$ (Proposition 5.2 in \cite{Ribet2004});
	\item
	the degree of $\mathcal{X}$ over $\Q$ is $2[K_f:F]$ (Theorem 5.1 in \cite{ribet1980twists});
	\item
	$F$ is a totally real number field, and $\Gal(K_f/F)$ is the group of inner twists of $f$ (Corollary 5.4 in \cite{Ribet2004});
	\item
	$[K_f:F] = nt$ (Proposition 5.2 in \cite{Ribet2004}).
\end{enumerate}

If $f$ does not have inner twists, then $\Gal(K_f/F)$ is trivial, and so $n=1$; i.e., $A_f$ is absolutely simple.

If $f$ does have inner twists, then we have $2 = nt$, so $n = 1 \Leftrightarrow t=2$; i.e., $A_f$ is absolutely simple if and only if $\mathcal{X}$ is an indefinite quaternion division algebra over $F$ of degree $4$ over $\Q$.

The statements about the endomorphisms being realised over a totally complex field, and $A_f$ having potential good reduction everywhere, follow from the observation that $A_f/K$, when base-changed to the field $K$ over which all endomorphisms are defined, satisfies the definition of a \emph{fake elliptic curve}, and thus follow from the known properties of these objects; see e.g. Section 4 of \cite{halukandsamir}, who attribute this to Jordan (Section 3 in \cite{jordan1986points}).

The statement about the valuations of primes dividing $N$ follows from Theorem 3 in \cite{cremona1992abelian}.
\end{proof}

One could in principle run the algorithm explained in \Cref{sec:find_examples_using_code} on all non-CM newforms with no non-trivial inner twists to furnish an example. However, for each such level $N$, the implementation \verb|find_dihedral.sage| in \emph{loc. cit.} first constructs the entire space of newforms $S_2(\Gamma_1(N))^{new}$, and thereafter takes only those of dimension~$2$; as such, it is very inefficient.

We therefore implemented a faster approach in \verb|find_simple_dihedral_with_api.sage| in \emph{loc. cit.}, which refactors the main algorithms in \verb|find_dihedral.sage| to take as input not \texttt{Newform} objects, but rather lists of Fourier coefficients at prime values of newforms. A list of the newform labels to be checked is generated from the LMFDB, with the following parameters:

\begin{itemize}
	\item Dimension $2$;
	\item No CM;
	\item Inner twist count $1$.
\end{itemize}

For each label in this list, the Fourier coefficients $a_p$ for prime $p$ are obtained with a call to the \href{http://www.lmfdb.org/api/}{LMFDB API}. Running the refactored algorithm to find dihedral newforms on all 15,838 candidate newforms in the LMFDB takes about half an hour on an old laptop.

The results obtained are summarised in \Cref{tab:abs-simple-dihedral-newforms}. All the forms have Fourier coefficient field $\Q(\sqrt{2})$; we write $\beta = \sqrt{2}$. The projective images are verified as before, computing the orders of root quotients of the characteristic polynomials of Frobenius at several primes. This in particular allows one to rule out reducibility of the representation, by showing that the distribution of orders is inconsistent with a cyclic group. Unlike in \Cref{sec:find_examples_using_code}, the projective images at the two prime ideals above $7$ are not isomorphic; we provide the dihedral image, which occurs at the prime ideal given in the table. For the other prime ideal not given, where the algorithm returns that it does not have dihedral image, one readily finds a prime $p$ such that the characteristic polynomial of $\Frob_p$ is irreducible over $\F_7$, and hence the image is not contained in a Borel subgroup, which is sufficient for our purposes from \Cref{cor:suff_conds_for_hasse}.

\begin{table}[htp]
\begin{center}
\begin{tabular}{|c|c|c|c|c|}
\hline
LMFDB Label & $q$-expansion & $\PP\rho(G_\Q)$ & Prime ideal \\
\hline
7938.2.a.bj & $q - q^2 + q^4 - q^8 - 9\beta q^{11} + O(q^{12})$ & $D_6$ & $(1 - 2\beta)$\\
\hline
7938.2.a.bk & $q - q^2 + q^4 - q^8 + 3\beta q^{11} + O(q^{12})$ & $D_6$ & $(1 + 2\beta)$\\
\hline
7938.2.a.bp & $q + q^2 + q^4 + q^8 + 9\beta q^{11} + O(q^{12})$ & $D_6$ & $(1 - 2\beta)$\\
\hline
7938.2.a.bq & $q + q^2 + q^4 + q^8 + 9\beta q^{11} + O(q^{12})$ & $D_6$ & $(1 - 2\beta)$\\
\hline
9099.2.a.e & $q - 2q^4 + (-3 - \beta)q^5 + (-2 + 2\beta)q^7 + O(q^{12})$ & $D_{12}$ & $(1 - 2\beta)$\\
\hline
9099.2.a.g & $q - 2q^4 + (3 + \beta)q^5 + (-2 + 2\beta)q^7 + O(q^{12})$ & $D_{12}$ & $(1 - 2\beta)$\\
\hline
\end{tabular}
\vspace{0.3cm}
\caption{\label{tab:abs-simple-dihedral-newforms}Newforms arising as output from Anni's algorithm using the LMFDB API. The three forms with projective image $D_6$ yield absolutely simple Hasse surfaces over $\Q$ at $7$.}
\end{center}
\end{table}

The first example in the above table is \Cref{example:abs_simple_hasse} from the Introduction, which yields a Hasse at $7$ surface by \Cref{cor:suff_conds_for_hasse}.

\section{Modular Hasse surfaces are congruent to CM newforms} \label{sec:cm_congruence}

In this section we prove \Cref{thm:cm_congruence}.

Let $f \in S_2(\Gamma_1(N))$ be a newform, and $\ell$ a prime which splits completely in the ring of integers $\mathcal{O}_f$ of the Fourier coefficient field $K_f$. By assumption that $A_f$ is Hasse, there exists a prime ideal $\lambda | \ell$ such that the projective image of $\overline{\rho}_{f,\lambda}$ is a Hasse subgroup of $\PGL_2(\F_\ell)$. Therefore, by \Cref{prop:hasse_elliptic}, we have that $\im \PP \overline{\rho}_{f,\lambda}$ is a dihedral group. 

Henceforth, for ease of notation, write $\overline{\rho}$ for $\overline{\rho}_{f,\lambda}$. Since $\det \overline{\rho}$ is surjective in $\F_\ell^\times$, we have that $\det \PP \overline{\rho}$ is surjective in $\left\{\pm 1\right\}$. The kernel of $\det$ is an index-$2$ subgroup of a dihedral group of order $2n$ with $n$ odd, and therefore is cyclic of order $n$. We thus obtain that the kernel of the composition

\[ G_\Q \xrightarrow{\PP \overline{\rho}} D_{2n} \longrightarrow D_{2n}/C_n \longrightarrow \left\{\pm1\right\} \]

corresponds to the imaginary quadratic field $\Q(\sqrt{-\ell})$.

We may now apply Th\'{e}or\`{e}me 1.1 of \cite{billerey2018representations} to obtain the existence of a CM newform $g$ such that $\overline{\rho}$ is isomorphic to the mod-$\lambda'$ reduction of the $\lambda'$-adic $G_\Q$-representation $\rho_{g,\lambda'}$, for some prime ideal $\lambda'$ lying over $\ell$ in the Fourier coefficient field of $g$ (which need not be the same as that of $f$). Moreover, from the proof of Corollaire 1.3 in \emph{loc. cit.}, we have that the weight of $g$ is $2$. This yields the desired congruence.\qed

\begin{remark}
Theorem A in \cite{orr_skorobogatov_2018} tells us that there are only finitely many $\overline{\Q}$-isomorphism classes of abelian surfaces over $\Q$ with complex multiplication. There are therefore only finitely many $\overline{\Q}$-isomorphism classes of Hasse modular abelian surfaces with CM. Since the field of complex multiplication in this case must be an imaginary quadratic field of class number 1 or 2, there are only finitely many such. Note that Gonz\'{a}lez (Theorem 3.2 in \cite{gonzalez2011}) has enumerated the possible pairs $(\End_{\overline{\Q}}^0(A_f), \End_{\Q}^0(A_f))$, for $A_f$ a two-dimensional modular abelian surface with complex multiplication; there are 83 such pairs.
\end{remark}

\section{Acknowledgements}
	This work was supported by a grant from the Simons Foundation (546235)
	for the collaboration `Arithmetic Geometry, Number Theory, and
	Computation', through a workshop held virtually at ICERM in June 2020. I am deeply indebted to the organisers of that workshop for extending to me an invitation for participation. I particularly thank John Voight for publicly wondering ``what happens for abelian surfaces'' after Jacob Mayle's talk, which inspired me to return to this subject after a seven-year hiatus.

	I thank Alex Bartel for comments on an earlier version of this manuscript; Nicolas Billerey for a correspondence which clarified issues surrounding congruences between CM and non-CM newforms; Peter Bruin for a correspondence about reducible Galois representations of newforms, for comments on an earlier draft of the manuscript, and for verifying the projective images of some weight-$2$ newforms that arose as output to Anni's algorithm - the current check on reducibility via searching for Eisenstein series congruences is from Sage code that he provided to me; John Cremona for extensive comments on an earlier version of the manuscript; David Loeffler for a correspondence which identified the role of non-trivial inner twists in \Cref{prop:iso_image}; Nicolas Mascot for explaining how to detect dihedral image via traces of Frobenius, suggesting the algorithms in Anni's thesis, and for corrections to an earlier version of the manuscript; Martin Orr for insightful examples about lifting mod-$p$ dihedral representations to characteristic zero, as well as for comments on and corrections to an earlier version of the manuscript; Samir Siksek for suggestions and ideas for further development; Andrew Sutherland for explaining how to use the LMFDB API to obtain Fourier coefficients of modular forms; and John Voight for questions about the CM examples from an earlier version of the manuscript.

	I thank John Cremona for giving me his copy of \emph{Modular Curves and Abelian Varieties} on my last day at Warwick as his PhD student, and Jonny Evans for giving me his copy of \emph{A First Course in Modular Forms} as I was leaving Cambridge. Both proved to be essential in the course of this work.

	I wish to express my sincere gratitude to all those who have provided open access to otherwise prohibitively expensive material.

\bibliographystyle{alpha}
\bibliography{/home/barinder/Documents/database.bib}{}

\newcommand{\etalchar}[1]{$^{#1}$}
\begin{thebibliography}{{LMF}20}

\bibitem[Ann13]{AnniThesis}
Samuele Anni.
\newblock {\em Images of {G}alois representations}.
\newblock PhD thesis, Universiteit Leiden, 2013.

\bibitem[Ann14]{anni2014local}
Samuele Anni.
\newblock A local--global principle for isogenies of prime degree over number
  fields.
\newblock {\em Journal of the London Mathematical Society}, 89(3):745--761,
  2014.

\bibitem[Ban13]{BanThesis}
Barinder~S Banwait.
\newblock {\em On some local to global phenomena for abelian varieties}.
\newblock PhD thesis, University of Warwick, 2013.

\bibitem[Ban20]{dihedralnewforms2020}
Barinder~S Banwait.
\newblock Dihedral newforms d'apr\`{e}s {S}amuele {A}nni and {P}eter {B}ruin.
\newblock \url{https://github.com/barinderbanwait/dihedral-newforms}, 2020.

\bibitem[BBK{\etalchar{+}}20]{galreps}
Barinder~S Banwait, Armand Brumer, Hyun~Jong Kim, Zev Klagsbrun, Jacob Mayle,
  Padmavathi Srinivasan, and Isabel Vogt.
\newblock Galois representations of abelian surfaces.
\newblock Sage library, 2020.
\newblock In progress.

\bibitem[BC14]{BC13}
Barinder~S Banwait and John~E Cremona.
\newblock Tetrahedral elliptic curves and the local-global principle for
  isogenies.
\newblock {\em Algebra \& Number Theory}, 8(5):1201--1229, 2014.

\bibitem[BN18]{billerey2018representations}
Nicolas Billerey and Filippo~AE N{uccio Mortarino Majno di Capriglio}.
\newblock Repr{\'e}sentations galoisiennes di{\'e}drales et formes {\`a}
  multiplication complexe.
\newblock {\em Journal de Th{\'e}orie des Nombres de Bordeaux}, 30(2):651--670,
  2018.

\bibitem[BPR13]{BPR}
Yuri Bilu, Pierre Parent, and Marusia Rebolledo.
\newblock Rational points on {$X_0^+(p^r)$}.
\newblock {\em Ann. Inst. Fourier (Grenoble)}, 63(3):957--984, 2013.

\bibitem[CKV20]{cullinan2020probabilistic}
John Cullinan, Meagan Kenney, and John Voight.
\newblock On a probabilistic local-global principle for torsion on elliptic
  curves.
\newblock {\em arXiv preprint arXiv:2005.06669}, 2020.

\bibitem[Cre92]{cremona1992abelian}
J~E Cremona.
\newblock Abelian varieties with extra twist, cusp forms, and elliptic curves
  over imaginary quadratic fields.
\newblock {\em Journal of the London Mathematical Society}, 2(3):404--416,
  1992.

\bibitem[CS20]{costello2020supersingular}
Craig Costello and Benjamin Smith.
\newblock The supersingular isogeny problem in genus 2 and beyond.
\newblock In {\em International Conference on Post-Quantum Cryptography}, pages
  151--168. Springer, 2020.

\bibitem[Cul12]{cullinan2012symplectic}
John Cullinan.
\newblock Symplectic stabilizers with applications to abelian varieties.
\newblock {\em International Journal of Number Theory}, 8(02):321--334, 2012.

\bibitem[Die01]{dieulefait2001newforms}
Luis~V Dieulefait.
\newblock Newforms, inner twists, and the inverse {G}alois problem for
  projective linear groups.
\newblock {\em Journal de th{\'e}orie des nombres de Bordeaux}, 13(2):395--411,
  2001.

\bibitem[Die02]{dieulefait2002explicit}
Luis~V Dieulefait.
\newblock Explicit determination of the images of the {G}alois representations
  attached to abelian surfaces with {$\End(A)=\ZZ$}.
\newblock {\em Experimental Mathematics}, 11(4):503--512, 2002.

\bibitem[DS05]{diamond2005first}
Fred Diamond and Jerry~Michael Shurman.
\newblock {\em A {F}irst {C}ourse in {M}odular {F}orms}, volume 228 of {\em
  Graduate Texts in Mathematics}.
\newblock Springer, 2005.

\bibitem[Etr15]{etropolski2015local}
Anastassia Etropolski.
\newblock Local-global principles for certain images of {G}alois
  representations.
\newblock {\em arXiv preprint arXiv:1502.01288}, 2015.

\bibitem[Gon11]{gonzalez2011}
Josep Gonz\'{a}lez.
\newblock Finiteness of endomorphism algebras of {CM} modular abelian
  varieties.
\newblock {\em Rev. Mat. Iberoamericana}, 27(3):733--750, 09 2011.

\bibitem[Hal11]{hall2011open}
Chris Hall.
\newblock An open-image theorem for a general class of abelian varieties.
\newblock {\em Bulletin of the London Mathematical Society}, 43(4):703--711,
  2011.

\bibitem[Jor86]{jordan1986points}
Bruce~W Jordan.
\newblock Points on {S}himura curves rational over number fields.
\newblock {\em Journal f{\"u}r die reine und angewandte Mathematik},
  1986(371):92--114, 1986.

\bibitem[Kat80]{katz1980galois}
Nicholas~M Katz.
\newblock Galois properties of torsion points on abelian varieties.
\newblock {\em Inventiones mathematicae}, 62(3):481--502, 1980.

\bibitem[{LMF}20]{lmfdb}
The {LMFDB Collaboration}.
\newblock The {L}-functions and modular forms database.
\newblock \url{http://www.lmfdb.org}, 2020.
\newblock [Online; accessed 10 June 2020].

\bibitem[May20]{mayle}
Jacob Mayle.
\newblock Rigidity in {E}lliptic {Curve} {L}ocal-{G}lobal {P}rinciples.
\newblock {\em arXiv preprint arXiv:2005.05881}, 2020.

\bibitem[Orr17]{orr2017compatibility}
Martin Orr.
\newblock On compatibility between isogenies and polarizations of abelian
  varieties.
\newblock {\em International Journal of Number Theory}, 13(03):673--704, 2017.

\bibitem[OS18]{orr_skorobogatov_2018}
Martin Orr and Alexei~N. Skorobogatov.
\newblock Finiteness theorems for {$K3$} surfaces and abelian varieties of {CM}
  type.
\newblock {\em Compositio Mathematica}, 154(8):1571–1592, 2018.

\bibitem[Par05]{parent2005towards}
Pierre~JR Parent.
\newblock Towards the triviality of {$X_0^+(p^r)(\mathbb{Q})$} for $r > 1$.
\newblock {\em Compositio Mathematica}, 141(3):561--572, 2005.

\bibitem[Rib77]{ribet1977galois}
Kenneth~A Ribet.
\newblock Galois representations attached to eigenforms with {N}ebentypus.
\newblock In {\em Modular functions of one variable V}, pages 18--52. Springer,
  1977.

\bibitem[Rib80]{ribet1980twists}
Kenneth~A Ribet.
\newblock Twists of modular forms and endomorphisms of abelian varieties.
\newblock {\em Mathematische Annalen}, 253(1):43--62, 1980.

\bibitem[Rib04]{Ribet2004}
Kenneth~A Ribet.
\newblock {\em Abelian Varieties over {$\mathbb{Q}$} and Modular Forms}, pages
  241--261.
\newblock Birkh{\"a}user Basel, Basel, 2004.

\bibitem[Ser68]{serre1968abelian}
Jean-Pierre Serre.
\newblock {\em Abelian {$\ell$}-adic representations and elliptic curves}.
\newblock W.A. Benjamin, Inc., 1968.

\bibitem[Ser77]{serre1977modular}
Jean-Pierre Serre.
\newblock Modular forms of weight one and {G}alois representations.
\newblock In {\em Algebraic number fields: L-functions and Galois properties}.
  Academic Press, London, 1977.

\bibitem[Shi71]{shimura1971introduction}
Goro Shimura.
\newblock {\em Introduction to the arithmetic theory of automorphic functions},
  volume~1.
\newblock Princeton University Press, 1971.

\bibitem[{\c{S}}S18]{halukandsamir}
Mehmet~Haluk {\c{S}}eng\"{u}n and Samir Siksek.
\newblock On the asymptotic {F}ermat's last theorem over number fields.
\newblock {\em Commentarii Mathematici Helvetici}, 93(2):359--375, 2018.

\bibitem[Sut12]{Drew}
Andrew~V Sutherland.
\newblock A local-global principle for rational isogenies of prime degree.
\newblock {\em Journal de th{\'e}orie des nombres de Bordeaux}, 24(2):475--485,
  2012.

\bibitem[{The}20]{sagemath}
{The Sage Developers}.
\newblock {\em {S}ageMath, the {S}age {M}athematics {S}oftware {S}ystem
  ({V}ersion 9.0)}, 2020.
\newblock {\tt https://www.sagemath.org}.

\bibitem[Vog20]{vogt2020local}
Isabel Vogt.
\newblock A local-global principle for isogenies of composite degree.
\newblock {\em Proceedings of the London Mathematical Society},
  121(6):1496--1530, 2020.

\end{thebibliography}
\end{document}